\newtheorem{theorem}{Theorem}[section]
\newtheorem{proposition}[theorem]{Proposition}
\newtheorem{corollary}[theorem]{Corollary}
\newtheorem{lemma}[theorem]{Lemma}
\newtheorem{sub-lemma}[theorem]{Sub-Lemma}
\newtheorem{remark}[theorem]{Remark}
 \def\R{\mathbb{R}}
\DeclareMathOperator{\esssup}{ess sup}
\DeclareMathOperator{\osc}{osc}
\DeclareMathOperator{\Leb}{Leb}
\newcommand{\cF}{{\mathcal F}}
\newcommand{\tSigma}{{\widetilde \Sigma}}
\let\eps=\varepsilon
\renewcommand{\limsup}{\mathop{{\overline {\hbox{{\rm lim}}}}}}
\begin{document}
\title{Variance continuity for Lorenz flows}
\author{Wael Bahsoun$\dagger$}
\address{$\dagger$ Department of Mathematical Sciences, Loughborough University,
Loughborough, LE11 3TU, UK}
\email{$\dagger$ W.Bahsoun@lboro.ac.uk}
\author{Ian Melbourne*}
\address{* Mathematics Institute, University of Warwick, Coventry, CV4 7AL, UK}
\email{* I.Melbourne@warwick.ac.uk}
\author{Marks Ruziboev$\ddagger$}
\address{$\ddagger$ Mathematisch Instituut, Universiteit Leiden, Niels Bohrweg 1, 2333 CA Leiden, The Netherlands}
\email{$\ddagger$ m.ruziboev@math.leidenuniv.nl}
\thanks{WB and MR would like to thank The Leverhulme Trust for supporting their research through the research grant RPG-2015-346. The research of IM was supported in part by a European
Advanced Grant StochExtHomog (ERC AdG 320977).
We are very grateful to the referee for important comments that have led to a much more readable explanation of the method in this paper.}
\keywords{Lorenz flow, Statistical stability, Central Limit Theorem}
\subjclass{Primary 37A05, 37C10, 37E05}

\begin{abstract}
The classical Lorenz flow, and any flow which is close to it in the $C^2$-topology, satisfies a Central Limit Theorem (CLT). We prove that the variance  in the CLT varies continuously.
\end{abstract}
\date{\today}
\maketitle

\markboth{Wael Bahsoun, Ian Melbourne \and Marks Ruziboev}{Variance continuity for Lorenz flows}
\bibliographystyle{plain}

\section{Introduction}
In 1963, Lorenz \cite{Lo} introduced the following system of equations 
\begin{equation}\label{LS}
\begin{cases}
\dot x = -10x+10y \\
\dot y = 28x-y-xz \\
\dot z = -\frac{8}{3}z+xy
\end{cases}
\end{equation}
as a simplified model for atmospheric convection. Numerical simulations performed by Lorenz showed that the above system exhibits sensitive dependence on initial conditions and has a non-periodic ``strange" attractor.  Since then, \eqref{LS} became a basic example of a chaotic deterministic system that is notoriously difficult to analyse. 

A rigorous mathematical framework of similar flows was initiated with the introduction of the so called  geometric Lorenz attractors in \cite{ABS, GW}. The papers \cite{Tu1, Tu} provided a computer-assisted proof that the classical Lorenz attractor in~\eqref{LS} is indeed a geometric Lorenz attractor.  In particular, it is a singularly hyperbolic attractor~\cite{MPP}, namely a nontrivial robustly transitive attracting invariant set containing a singularity (equilibrium 
point).  Moreover, there is a distinguished Sinai-Ruelle-Bowen (SRB) ergodic probability measure, see for example~\cite{APPV}.
Statistical limit laws, in particular the central limit theorem (CLT) for H\"older observables, were first obtained in \cite{HM} for the classical Lorenz attractor, and were shown for general singular hyperbolic attractors in~\cite{AM19}. 
For further background and a complete list of references up to 2010 we refer the reader to the monograph of Ara\'ujo and Pac\'ifico \cite{AP}. 

Let $X_\eps:\R^3\times\R\to\R^3$, $\eps\ge0$, be a continuous family of $C^2$ flows on $\R^3$ admitting a geometric Lorenz attractor with
singularities $x_\eps$ and corresponding SRB measures $\mu_\eps$.
Precise definitions are given in section~\ref{GLA}; 
in particular, the framework includes the classical Lorenz attractor.
By~\cite{AS,BR}, the flows $X_\eps$
are statistically stable: for any continuous $\psi:\R^3\to\R$ 
$$\lim_{\eps\to 0}\int\psi\, d\mu_{\eps}=\int\psi\, d\mu_0.$$
The CLT in~\cite{AM19,HM} states that for fixed $\eps\ge 0$
and $\psi:\R^3\to\R$ H\"older, there exists $\sigma^2=\sigma^2_{X_\eps}(\psi)\ge0$ such that
\begin{equation} \label{eq:CLT}
\frac{1}{\sqrt t}\left(\int_{0}^{t}\psi\circ X_{\eps}(s)\, ds -t\int \psi\, d\mu_\eps\right){\overset{\text{law}}{\longrightarrow}}  \mathcal {N}(0, \sigma^2)
\quad\text{as $t\to\infty$}.
\end{equation}
By~\cite[Section~4.3]{HM}, $\sigma^2$ is typically nonzero.

We prove continuity of the variance, namely that $\eps\mapsto \sigma^2_{X_\eps}(\psi)$ is continuous.  At the same time, we obtain estimates on the dependence of the variance on $\psi$.  We now state
the main result of the paper.
Define $\|\psi\|=\int|\psi|\,d\mu_0+|\psi(x_0)|$.

\begin{theorem}\label{main3} 
Let $\psi,\,\psi':\R^3\to\R$ be
H\"older observables.  Then
\begin{itemize}
\item[(a)]
$\lim_{\eps\to0}\sigma^2_{X_\eps}(\psi)=\sigma^2_{X_0}(\psi)$; and
\item[(b)]
there exists a constant $C>0$ (depending only on the H\"older norms of $\psi$ and $\psi'$) such that
\[
|\sigma^2_{X_0}(\psi)-\sigma^2_{X_0}(\psi')|\le C\|\psi-\psi'\|(1+|\log\|\psi-\psi'\||).
\]
\end{itemize}
\end{theorem}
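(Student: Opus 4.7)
The plan is to reduce the problem to a one-dimensional uniformly expanding map via the standard two-step reduction for geometric Lorenz attractors, and then combine a Green--Kubo representation of the variance with transfer-operator perturbation theory. Concretely, I would first pass from $X_\eps$ to its Poincar\'e return map $F_\eps$ on a two-dimensional cross-section $\tSigma$ with return time $r_\eps$, so that $\mu_\eps$ is the suspension of an $F_\eps$-invariant hyperbolic measure $\nu_\eps$ under $r_\eps$. Quotienting along the stable foliation yields a piecewise uniformly expanding Lorenz-type map $f_\eps:I\to I$ with a singularity and a unique acim $\bar\nu_\eps$. To a H\"older observable $\psi$ I would associate the induced observable $\Psi_\eps(x)=\int_0^{r_\eps(x)}\psi(X_\eps(s,x))\,ds$ on $\tSigma$; after subtracting a coboundary built from the uniform stable contraction of $F_\eps$, this descends to a function $\bar\Psi_\eps$ on $I$ that is regular away from the singular leaf.

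\textbf{Proof of (a).} The Green--Kubo formula for the suspension flow gives
\[
\sigma^2_{X_\eps}(\psi)=\frac{1}{\bar r_\eps}\Bigl(\int\bar\Psi_\eps^{\,2}\,d\bar\nu_\eps+2\sum_{n\ge 1}\int\bar\Psi_\eps\cdot\bar\Psi_\eps\circ f_\eps^{\,n}\,d\bar\nu_\eps\Bigr),
\]
with all observables centred and $\bar r_\eps=\int r_\eps\,d\nu_\eps$. I would then apply Keller--Liverani perturbation theory to the family of transfer operators $L_\eps$ acting on a suitable Banach space (BV or quasi-H\"older on $I$) to obtain, for all small $\eps$, a uniform spectral gap and convergence of the spectral data to those of $L_0$. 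This yields uniform exponential decay of correlations and termwise convergence $\int\bar\Psi_\eps\cdot\bar\Psi_\eps\circ f_\eps^{\,n}\,d\bar\nu_\eps \to \int\bar\Psi_0\cdot\bar\Psi_0\circ f_0^{\,n}\,d\bar\nu_0$; combined with statistical stability of $\bar\nu_\eps$ and continuity of $\bar r_\eps$, dominated convergence gives (a).

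\textbf{Proof of (b).} Fix $\eps=0$ and split the series at a cutoff $N$. Each of the first $N$ correlations is a bilinear functional in $(\psi,\psi')$, continuous from the norm $\|\cdot\|$ into $\R$: the $L^1(\mu_0)$ part controls the bulk of $\bar\Psi_0-\bar\Psi'_0$ while $|\psi(x_0)-\psi'(x_0)|$ controls its contribution near the singular leaf, where $r_0$ blows up logarithmically. This gives a contribution of order $CN\|\psi-\psi'\|$, while the tail is bounded by $C\theta^N$ for some $\theta\in(0,1)$ coming from the spectral gap. Optimising $N\sim|\log\|\psi-\psi'\||$ balances the two and produces the claimed bound $C\|\psi-\psi'\|(1+|\log\|\psi-\psi'\||)$.

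\textbf{Main obstacle.} The principal difficulty throughout is the singularity at $x_\eps$: the return time $r_\eps$ is unbounded on the singular leaf, and the induced observable $\Psi_\eps$ is only piecewise H\"older, with a jump whose size is controlled by $\psi(x_\eps)$. I must therefore choose the Banach space and the strong/weak norm pair so that Keller--Liverani applies uniformly in $\eps$ despite this unboundedness, and verify that the bilinear correlation functional is $\|\cdot\|$-continuous with constants independent of $n$ up to an exponential tail. This is also what forces the specific form $\|\psi\|=\int|\psi|\,d\mu_0+|\psi(x_0)|$ rather than a pure $L^1$-norm, and is ultimately responsible for the logarithmic factor in (b).
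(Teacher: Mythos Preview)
Your high-level strategy---Green--Kubo representation, uniform spectral gap via Keller--Liverani, statistical stability, and the cut-off/optimisation argument for~(b)---is correct and matches the paper's. There are two points where the implementations diverge, one a matter of taste and one a genuine gap.

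\textbf{Two dimensions versus one.} The paper does not quotient all the way to the one-dimensional map for the variance computation. It stays at the level of the two-dimensional Poincar\'e map $F_\eps$, using anisotropic norms and the uniform decay-of-correlations machinery of Ara\'ujo--Galatolo--Pac\'ifico (Theorem~\ref{GalatoloThm} and Corollary~\ref{cor:Galatolo}). Your Sinai-type coboundary reduction to an observable $\bar\Psi_\eps$ on $I$ is a legitimate alternative, but it adds the burden of controlling the $\eps$-dependence of the coboundary $\chi_\eps$ itself, and of checking that $\bar\Psi_\eps$ lands in the chosen BV/quasi-H\"older space with uniformly bounded norm. The paper's route avoids this by never leaving the skew-product.

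\textbf{The singularity.} This is where the real work lies, and your proposal is vague precisely here. You describe $\Psi_\eps$ as ``piecewise H\"older, with a jump whose size is controlled by $\psi(x_\eps)$'' and plan to absorb the unboundedness of $r_\eps$ into the choice of Banach space. But if $\psi(x_\eps)\neq 0$ then $\Psi_\eps$ is not piecewise H\"older at all: it is \emph{unbounded}, behaving like $-\psi(x_\eps)\lambda_{1,\eps}^{-1}\log|x|$ near $\Gamma$, and no BV or quasi-H\"older space on $I$ will contain it (those spaces embed in $L^\infty$). The paper's remedy is the simple but decisive move of replacing $\psi$ by $\tilde\psi=\psi-\psi(x_\eps)$, which leaves the variance unchanged by~\eqref{eq:MT1}, and then proving by an explicit computation in linearised coordinates near the singularity (Lemma~\ref{lem:Psi} and Corollary~\ref{cor:reg}) that the induced observable of $\tilde\psi$ is genuinely piecewise $C^\alpha$ with $\sup_\eps\|\Psi_\eps\|_\alpha\le C\|\psi\|_\beta$. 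This is the key technical step announced in the introduction, and it is exactly what forces the norm $\|\psi\|=\int|\psi|\,d\mu_0+|\psi(x_0)|$ in~(b): the point-mass term appears because passing from $\psi-\psi'$ to $\tilde\psi-\tilde\psi'$ costs $|\psi(x_0)-\psi'(x_0)|$, not because of any residual singularity in the induced observable.
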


\begin{remark}  In part (b), we obtain closeness of the variances provided the observables are close in $L^1$ with respect to both the SRB measure and the Dirac point mass at $0$ (provided the individual H\"older norms are controlled).  It is an easy consequence of the methods in this paper that the logarithmic factor can be removed if the norm $\|\;\|$ is replaced by the H\"older norm.
\end{remark}

\begin{remark}
All results in this paper go through without change for continuous families of $C^r$ flows, $r>1$.  We take $r=2$ for notational convenience.
\end{remark}

Our technique is based on first proving variance continuity for the corresponding family of Poincar\'e maps and then passing the result to the family of flows. The main difficulty in passing from maps to flows lies in the fact that the return time function to the Poincar\'e section is unbounded. 
A key step in overcoming this hurdle is to show that 
for any H\"older observable on $\R^3$ vanishing at the singularity, the induced observable for the Poincar\'e map is piecewise H\"older. 
Related results for various classes of discrete time dynamical systems can be found 
in~\cite{BCV,DZ,KKM} using somewhat different methods, but there are no previous results for Lorenz attractors.

\bigskip

The paper is organised as follows. In section \ref{setup} we recall the basic setup and notation associated with (families of) geometric Lorenz 
attractors. 
In section~\ref{sec:normal} we show how to normalise the families of flows to obtain simplified coordinates for the proofs. Section \ref{Pre} contains properties of one dimensional Lorenz maps. Section \ref{app} studies the family of Poincar\'e maps. It starts by showing that the family of maps admit a uniform rate of correlations decay for piecewise H\"older functions, using suitable anisotropic norms. We then use the Green-Kubo formula to show continuity of the variance for the family of  Poincar\'e maps. In section~\ref{varianceflow}  we prove a version of Theorem~\ref{main3} for normalised families and use this to prove Theorem~\ref{main3}.

\section{Geometric Lorenz attractors}\label{setup}

In this section, we recall the basic setup and notation associated with (families of) geometric Lorenz attractors.
In subsections~\ref{GLA} and~\ref{family}, we recall the class of (families of) geometric Lorenz attractors considered in the paper.

We begin with some notational preliminaries.
Let $U\subset\R^m$ be open.
Fix $\alpha\in(0,1)$ and recall that $f:U\to\R^n$ is $C^\alpha$ if 
there exists $C>0$ such that
$|f_j(x)-f_j(y)|\le C|x-y|^\alpha$ for all $x,y\in U$ 
and all $j=1,\dots,n$.
(Here $|x|=\sqrt{x_1^2+\dots+x_m^2}$ denotes the Euclidean norm on $\R^m$.)
Let $H_{\alpha}(f)$ be the least such constant $C$ and define the H\"older norm
$\|f\|_{\alpha}=|f|_\infty+H_\alpha(f)$.
Then $f$ is $C^{1+\alpha}$ if $Df:U\to\R^{n\times m}$ is $C^\alpha$
and we set 
$\|f\|_{1+\alpha}=|f|_\infty+\|Df\|_{\alpha}$.
A family $f_\eps$ of $C^{1+\alpha}$ maps, $\eps\ge0$, is said to be continuous if $\lim_{\eps\to\eps_0}\|f_\eps-f_{\eps_0}\|_{1+\alpha}=0$.
Similarly, we speak of continuous families of 
$C^2$ flows, $C^{1+\alpha}$ diffeomorphisms, and so on.
In the case of Lorenz flows, we are particularly interested in families of $C^2$ flows on $\R^3$ restricted to an open bounded region $U$ of phase space; for convenience we suppress mentioning the subset $U$.

 \subsection{Definition of geometric Lorenz attractors}  \label{GLA}
There are various notions of geometric Lorenz attractor in the literature depending on the properties being analysed.  Roughly speaking, we take a geometric Lorenz attractor to be a singular hyperbolic attractor for a vector field on $\R^3$ with a single singularity $x_0$  and a connected global cross-section
with a $C^{1+\alpha}$ stable foliation.
As promised, we now give a precise description.

Let $G:\R^3\to\R^3$ be a $C^2$ vector field satisfying $G(x_0)=0$ and let $X$ be the associated flow. 
	We suppose that 
	the differential $DG(x_0)$ at the singularity has three real eigenvalues $\lambda_2<\lambda_3<0<\lambda_1$ satisfying $\lambda_1+\lambda_3>0$ (Lorenz-like singularity).

	Let $\Sigma$ be a two-dimensional rectangular cross-section transverse to the flow chosen in a neighbourhood of the singularity $x_0$,
	and let $\Gamma$ be the intersection of $\Sigma$ with the local stable manifold of $x_0$. 
	We suppose that there exists a well defined Poincar\'e map $F:\Sigma\setminus\Gamma\to\Sigma$.  Moreover, we 
assume that the underlying flow is singular hyperbolic~\cite{MPP}.  It follows~\cite[Theorem~4.2]{AM17} that a neighbourhood of the attractor is foliated by 
one-dimensional $C^2$ stable leaves.  
We assume $q$-bunching for some $q>1$ in~\cite[condition~(4.2)]{AM17}.  By~\cite[Theorem~4.12 and Remark~4.13(b)]{AM17}, it follows that
the stable foliation for the flow is $C^q$.

The foliation by stable leaves for the flow naturally induces (see for example~\cite[Section~3.1]{AM19}) a $C^q$ ($q>1$) foliation inside $\Sigma$ of a neighbourhood of the attractor intersected with $\Sigma$
by one-dimensional $C^2$ stable leaves for the Poincar\'e map $F$.
We denote this stable foliation for $F$ by $\cF$.

The stable leaves for the flow are exponentially contracting~\cite[Theorem~4.2(a)(3)]{AM17} and this property is inherited by the stable leaves for $F$.  This means that there exists $\rho\in(0,1)$, $K>0$ such that
\begin{equation} \label{eq:rho}
		|F^n \xi_1- F^n \xi_2|\le K\rho^n|\xi_1- \xi_2|,
\end{equation}
for $\xi_1,\xi_2$ in the same stable leaf in $\cF$ and $n\ge1$.

	Let $I\subset\Sigma$ be a smoothly ($C^\infty$) embedded one-dimensional subspace transverse to the stable foliation and let $T:I\to I$ be the one-dimensional map obtained from $F$ by quotienting along stable leaves.  Let $\xi_0$ be the intersection of $I$ with $\Gamma$.  

\begin{proposition}\label{T}
$T$ is a Lorenz-like expanding map.  That is, $T$ is monotone (without loss we take $T$ to be increasing) and piecewise $C^{1+\alpha}$  on $I\setminus\{\xi_0\}$ for some $\alpha\in(0,1)$ with
		a singularity at $\xi_0$ and one-sided limits 
\mbox{$T(\xi_0^+)<0$} and $T(\xi_0^-)>0$.
Also, $T$ is uniformly expanding:
		there are constants \mbox{$c>0$} and $\theta>1$ such that $(T^n)'(x)\ge c\theta^{n}$ for all $n\ge1$, whenever $x\notin \bigcup_{j=0}^{n-1}T^{-j}(\xi_0)$.
\end{proposition}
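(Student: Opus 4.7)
The plan is to verify each clause of the proposition using the setup already established, with most of the properties flowing from the general theory of singular hyperbolic attractors as developed in~\cite{MPP,AM17,AP}.

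First I would check \emph{piecewise $C^{1+\alpha}$ regularity}. Since the stable foliation of the flow is $C^q$ with $q>1$, the induced stable foliation $\cF$ of $\Sigma$ is likewise $C^q$. The holonomy of $\cF$ onto the smooth transversal $I$ is then a $C^{1+\alpha}$ diffeomorphism with $\alpha=\min\{1,q-1\}\in(0,1)$, so quotienting $F$ by $\cF$ produces a well-defined $C^{1+\alpha}$ map on each connected component of $I\setminus\{\xi_0\}$.

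Next, \emph{monotonicity and the singular behaviour at $\xi_0$}. Monotonicity holds because $F$ comes from an orientation-preserving flow and is restricted to one side of the local stable manifold of $x_0$ at a time. The curve $\Gamma$ lies in the local stable manifold of $x_0$, so its points do not return to $\Sigma$; for $\xi$ tending to $\xi_0$ from either side, the corresponding orbit passes arbitrarily close to $x_0$, and the standard linearisation using the eigenvalues $\lambda_2<\lambda_3<0<\lambda_1$ forces trajectories on opposite sides of $\Gamma$ to exit into opposite halves of $\Sigma$, producing the one-sided limits $T(\xi_0^+)<0<T(\xi_0^-)$.

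The principal obstacle is \emph{uniform expansion}. Singular hyperbolicity on $\Sigma\setminus\Gamma$ supplies an invariant cone field transverse to $\cF$ in which $DF$ is uniformly expanding, and projecting to $I$ yields $|T'(\xi)|\ge c_0>0$ for $\xi$ bounded away from $\xi_0$; however $T'$ blows up as $\xi\to\xi_0$. The delicate point is to show that iterates returning repeatedly to a neighbourhood of $\xi_0$ still accumulate exponential expansion overall. The local normal form of the flow near $x_0$ implies that an orbit passing within distance $\delta$ of $\Gamma$ picks up a derivative factor that grows like a negative power of $\delta$, with the exponent determined by the eigenvalue ratios; the condition $\lambda_1+\lambda_3>0$ makes this near-singularity boost strong enough to dominate the intervening distortion. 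Combining the near-singularity blow-up of $T'$ with the uniform lower bound away from $\xi_0$ yields the uniform exponential growth $(T^n)'(\xi)\ge c\theta^n$ whenever $\xi\notin\bigcup_{j=0}^{n-1}T^{-j}(\xi_0)$.
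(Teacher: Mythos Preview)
Your treatment of piecewise $C^{1+\alpha}$ regularity and of the structural properties (monotonicity, the singularity at $\xi_0$, the one-sided limits) is correct and matches the paper, which disposes of these points in one line each.

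For uniform expansion the paper simply invokes \cite[Theorem~4.3]{AM19}; you instead attempt to sketch the argument, and here there is a genuine gap. First, the assertion that singular hyperbolicity ``supplies an invariant cone field transverse to $\cF$ in which $DF$ is uniformly expanding'' is not justified as stated: singular hyperbolicity gives a dominated splitting with $E^s$ uniformly contracted and $E^{cu}$ \emph{volume}-expanding, and volume expansion of a two-dimensional bundle containing the flow direction does not by itself yield expansion of individual vectors under the Poincar\'e map. Second, and more seriously, your mechanism for obtaining exponential growth---that the near-singularity blow-up of $T'$ compensates for a merely positive lower bound $|T'|\ge c_0$ away from $\xi_0$---cannot produce a \emph{uniform} estimate $(T^n)'\ge c\theta^n$. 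Orbits may avoid any fixed neighbourhood of $\xi_0$ for arbitrarily many iterates, and on such stretches your argument yields only $(T^n)'\ge c_0^n$, which is vacuous if $c_0<1$; no recurrence statement you have available makes the singularity visits frequent enough to repair this uniformly in $n$ and $x$.

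The argument behind \cite[Theorem~4.3]{AM19} proceeds differently: one extracts expansion of the quotient direction directly from the sectional volume expansion of $E^{cu}$ along flow orbits, combined with bounded geometry on the cross-section (uniform bounds on $|X|$ and on the angle between the flow and $\Sigma$), so that $(T^n)'$ is controlled by the accumulated flow time rather than by proximity to $\xi_0$. The eigenvalue condition $\lambda_1+\lambda_3>0$ governs the local form of $T$ near $\xi_0$ (in particular the blow-up of $T'$), but it is not the source of the global uniform expansion.
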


\begin{proof}
The map $F$ is piecewise $C^{1+\alpha}$ and the foliation by stable leaves is $C^{1+\alpha}$, so
$T$ is piecewise $C^{1+\alpha}$ on $I\setminus\{\xi_0\}$.  Uniform expansion follows from~\cite[Theorem~4.3]{AM19}.
The remaining properties are immediate.
\end{proof}

The final part of the definition of geometric Lorenz attractor is that the one-dimensional map $T$ is transitive on the interval 
$[T(\xi_0^+),T(\xi_0^-)]$.
It is then standard~\cite{AS,AM19,APPV,K} that $T$ has a unique absolutely continuous invariant probability measure (acip) $\bar\mu$ leading to a unique SRB measure $\mu$ for the 
geometric Lorenz attractor containing $x_0$. 
The basin of $\mu$ has full Lebesgue measure in a neighbourhood of the 
attractor.

\begin{remark}
The classical Lorenz attractor for the system of equations~\eqref{LS} (and for nearby equations) is an example of a geometric Lorenz attractor as defined above.
Except for $q$-bunching, the assumptions above are verified in~\cite{Tu}.
The $q$-bunching condition is verified in~\cite[Lemma~2.2]{AMV15}.
(By~\cite[Section~5]{AM17}, the optimal value of $q$ lies between $1.278$ and $1.705$; hence we have $C^{1+\alpha}$ regularity for the stable foliation as in~\cite{BR} but not $C^2$ regularity as in~\cite{AS}.)
\end{remark}
% \begin{remark}  
% An earlier version of this paper~\cite{BMR} shows that all our main results hold without requiring smoothness assumptions on the stable foliation (for the flow or the Poincar\'e map).  This includes examples of~\cite{DumortierKokubuOka95,OvsyannikovTuraev17} where the $q$-bunching condition ($q>1$) fails.  
% Unfortunately, some of the arguments rely on folklore theorems that are well known to experts but whose proofs are not readily available; writing out the standard but technical arguments goes far beyond the aims of this paper.  (In the current setting, the required arguments have been written down in~\cite{AM17}.)
% \end{remark}

 \subsection{Families of geometric Lorenz attractors}  \label{family}
Let $X_\eps$, $\eps\ge0$, be  a continuous  family of $C^2$ flows on $\R^3$ admitting a geometric Lorenz attractor as in subsection~\ref{GLA} with singularity $x_\eps$.  
The constants $K$ and $\rho$ in~\eqref{eq:rho} derive from the singular hyperbolic structure which varies continuously under $C^1$ perturbations. Hence $K$ and $\rho$ can be chosen independent of $\eps$.

Making an initial $C^2$ change of coordinates (varying continuously in $\eps$), we can suppose without loss that $x_\eps\equiv0$ and that $\Sigma$, $\Gamma$ and $I$
are given by 
	$
	\Sigma =\left\{(x, y, 1):-1\le x\le 1,\,-\frac 1 2\le   y \le \frac 1 2 \right\}
	$,
	$\Gamma=\{(0, y, 1): -\frac 1 2 \le  y \le  \frac 1 2\}$
	and $I=\{(x, 0, 1): -\frac 1 2 \le  x \le  \frac 1 2\}\cong[-\frac12,\frac12]$ for all $\eps$.
Throughout the paper, when we speak of a continuous family of $C^2$ flows admitting geometric Lorenz attractors, we assume that this initial change of coordinates has been performed.

Define the Poincar\'e return time to $\Sigma$,
\[
\tau_\eps:\Sigma\to(0,\infty), \qquad
\tau_\eps(\xi)=\inf\{t>0:X_\eps(\xi,t)\in\Sigma\}.
\]

\begin{proposition} \label{tau}
The return time $\tau_\eps:\Sigma\setminus\Gamma\to\Sigma$ 
is given by $\tau_\eps(x, y)=-\frac 1 {\lambda_{1,\eps}} \log|x|+\tau_{2,\eps}(x,y)$ where $\tau_{2,\eps}$ is a continuous family of $C^2$ functions.
\end{proposition}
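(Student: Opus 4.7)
The plan is to localise the analysis near the singularity and to use smooth linearisation (or a sufficiently smooth normal form) of the flow around $x_\eps=0$, so that the logarithmic blow-up in $\tau_\eps$ is made explicit by an elementary computation with the linearised flow, while the remaining contributions come from flowing through regions bounded away from the singularity and are therefore $C^2$ in $(x,y)$ and continuous in $\eps$.

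First I would apply a smooth linearising change of coordinates $\Psi_\eps$ in a neighbourhood $U_\eps$ of the singularity so that the vector field becomes $\dot u_1=\lambda_{1,\eps}u_1$, $\dot u_2=\lambda_{2,\eps}u_2$, $\dot u_3=\lambda_{3,\eps}u_3$ on $\Psi_\eps(U_\eps)$, with $\Psi_\eps$ depending continuously on $\eps$ in the appropriate smooth topology (using Sternberg-type results for hyperbolic singularities with real eigenvalues — non-resonance is implicit in the Lorenz setting — or, more robustly, the smooth normal form provided in the upcoming section~\ref{sec:normal}). Choose small $\delta>0$ and consider the auxiliary top cross-section $\Sigma^{\mathrm{in}}_\eps=\Psi_\eps^{-1}\{u_3=\delta\}$ and exit faces $\Sigma^{\mathrm{out}}_\eps=\Psi_\eps^{-1}\{|u_1|=\delta\}$.

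Next, I would decompose the return trajectory into three pieces: (i) the flow from $(x,y,1)\in\Sigma$ to $\Sigma^{\mathrm{in}}_\eps$, (ii) the transit through the linearised box from $\Sigma^{\mathrm{in}}_\eps$ to $\Sigma^{\mathrm{out}}_\eps$, and (iii) the flow from $\Sigma^{\mathrm{out}}_\eps$ back to $\Sigma$. Pieces (i) and (iii) are $C^2$ in $(x,y)$, continuous in $\eps$, and finite for every $(x,y)\in\Sigma\setminus\Gamma$, since the underlying flight times and Poincar\'e maps along $X_\eps$-orbits are $C^2$ (by smooth dependence of solutions of ODEs on initial conditions) and involve no excursion near $x_\eps$. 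Piece (ii) is computed explicitly in linearised coordinates: if $(u_1^0,u_2^0,\delta)$ is the entry point into the box, then the exit time is
\[
-\tfrac{1}{\lambda_{1,\eps}}\log|u_1^0|+\tfrac{1}{\lambda_{1,\eps}}\log\delta.
\]

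The linking step is to identify $u_1^0$ as a function of $(x,y)$. The $C^2$ map $\Psi_\eps\circ X_\eps(\,\cdot\,,\tau_{\mathrm{enter}}):\Sigma\to\{u_3=\delta\}$, projected to the first linearised coordinate, defines $u_1^0=\phi_\eps(x,y)$; $\phi_\eps$ is smooth, vanishes precisely on $\Gamma=\{x=0\}$ (as the preimage of the local stable manifold $\{u_1=0\}$), and has $\partial_x\phi_\eps\neq0$ on $\Gamma$ by transversality of the flow to $\Sigma$. Writing $\phi_\eps(x,y)=x\,h_\eps(x,y)$ with $h_\eps$ smooth and non-vanishing, the logarithmic term splits as $-\tfrac{1}{\lambda_{1,\eps}}\log|x|-\tfrac{1}{\lambda_{1,\eps}}\log|h_\eps(x,y)|$, and the second summand can be absorbed into the regular part. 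Collecting everything gives
\[
\tau_\eps(x,y)=-\tfrac{1}{\lambda_{1,\eps}}\log|x|+\tau_{2,\eps}(x,y),
\]
with $\tau_{2,\eps}$ a continuous family of $C^2$ functions.

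The main obstacle is the smoothness bookkeeping: to conclude that the error $-\tfrac{1}{\lambda_{1,\eps}}\log|h_\eps|$ is genuinely $C^2$ (and not just $C^{1+\alpha}$) one needs the linearising conjugacy to deliver $\phi_\eps$ with one extra derivative beyond the $C^2$ regularity of the flow. This is exactly the point where the smooth normalisation announced in section~\ref{sec:normal} is essential; with that normalisation in hand the decomposition above yields the stated form of $\tau_\eps$ and the continuous dependence on $\eps$ follows from the continuous dependence of $\Psi_\eps$, the flow, and the auxiliary Poincar\'e maps used in pieces (i) and (iii).
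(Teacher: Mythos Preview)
Your overall strategy matches the paper's: linearise the flow near the singularity, read off the logarithmic transit time explicitly, and treat the remaining legs as smooth first-hit times away from $0$. Your three-piece decomposition and the factorisation $u_1^0=x\,h_\eps(x,y)$ make explicit what the paper compresses into two sentences.

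The gap is in your justification of the smooth linearisation. Sternberg requires non-resonance of $\lambda_{1,\eps},\lambda_{2,\eps},\lambda_{3,\eps}$, which is not part of the paper's definition of a geometric Lorenz attractor; it is not ``implicit in the Lorenz setting''. More seriously, your fallback---``the smooth normal form provided in the upcoming section~\ref{sec:normal}''---is a misreading. Section~\ref{sec:normal} builds a $C^{1+\alpha}$ diffeomorphism $\omega_\eps$ on the cross-section $\Sigma$ that straightens the stable foliation of the \emph{Poincar\'e map} $F_\eps$; it does not linearise the flow near the singularity, and it is only $C^{1+\alpha}$ anyway, so it cannot supply the extra derivative you say you need for $\log|h_\eps|$ to be $C^2$.

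For comparison, the paper's own proof invokes only Hartman--Grobman (a topological conjugacy), asserts that ``the same formula holds in the original coordinates'', and declares the remainder a $C^2$ first-hit time away from~$0$. That argument is terse enough that the regularity of $\tau_{2,\eps}$ is left underjustified---your instinct that this step needs care is correct. In practice only $C^{1+\alpha}$ regularity is used downstream (Lemma~\ref{normaleps}(iii)), and the paper later gets what it actually needs via a H\"older linearisation~\cite{BarreiraValls07} in the proof of Corollary~\ref{cor:reg}. So the repair is not section~\ref{sec:normal}; it is either to add a non-resonance hypothesis for Sternberg, or to downgrade the claimed regularity and appeal to a H\"older Hartman--Grobman theorem.
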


\begin{proof}
Applying the Hartman-Grobman theorem for fixed $\eps$, we can linearise $X_\eps$ in a neighbourhood of $0$ so that the linearised flow is given by $(x,y,z)\mapsto (e^{\lambda_{1,\eps} t}x,e^{\lambda_{2,\eps} t}y,e^{\lambda_{3,\eps} t}z)$.
The time of flight in this neighbourhood is readily calculated in these coordinates to be
$-\frac 1 {\lambda_{1,\eps}} \log|x|$ for $x\in I$ and the same formula holds in the original coordinates.  The remaining flight time $\tau_{2,\eps}$ is a first hit-time for the $C^2$ flow away from the singularity at $0$ and hence is $C^2$.  Since $X_\eps$ is a continuous family of $C^2$ flows, it follows that $\tau_{2,\eps}$ is a continuous family of $C^2$ functions.
\end{proof}

\begin{theorem} \label{Teps}
Let $X_\eps$ be a continuous family of $C^2$ flows on $\R^3$ admitting a geometric Lorenz attractor.  Then 
there exists $\alpha>0$ such that the one-dimensional maps $T_\eps:I\to I$ form a continuous family of piecewise $C^{1+\alpha}$ maps.
\end{theorem}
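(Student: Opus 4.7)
The plan is to factor $T_\eps = \pi_\eps \circ F_\eps|_I$, where $F_\eps$ is the Poincar\'e map and $\pi_\eps : \Sigma \to I$ is the projection along the stable foliation $\cF_\eps$, and to establish continuous dependence on $\eps$ for each factor separately.

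First I would address the Poincar\'e factor $F_\eps|_I$. Since $F_\eps(\xi) = X_\eps(\xi, \tau_\eps(\xi))$ and Proposition~\ref{tau} expresses $\tau_\eps(x,y) = -\lambda_{1,\eps}^{-1}\log|x| + \tau_{2,\eps}(x,y)$ with $\tau_{2,\eps}$ a continuous family of $C^2$ functions and $\lambda_{1,\eps}$ continuous in $\eps$, composing with the continuous family of $C^2$ flows $X_\eps$ shows that $F_\eps$ is $C^2$ on $\Sigma\setminus\Gamma$ and that $\{F_\eps\}$ is $C^2$-continuous on every compact subset of $\Sigma\setminus\Gamma$. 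Restricting to $I^\pm := I\cap\{\pm x>0\}$ yields that $F_\eps|_I$ is piecewise $C^2$, with continuous dependence on $\eps$ in the $C^2$ topology on compact subsets of $I^\pm$.

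Next I would tackle the foliation factor $\pi_\eps$. By the $q$-bunching hypothesis ($q>1$) and \cite[Theorem~4.12 and Remark~4.13(b)]{AM17}, the stable foliation of the flow is $C^q$, and this regularity descends to $\cF_\eps$, so that $\pi_\eps$ is a $C^{1+\alpha}$ map with $\alpha := q-1$. For continuity in $\eps$, I would argue that the stable bundle $E^s_\eps$ varies continuously under $C^1$ perturbations as part of the singular hyperbolic splitting, and that the leaves of $\cF_\eps$ arise as fixed points of a graph transform whose defining data depends continuously on $\eps$. Uniform-in-parameter contraction estimates should then promote continuous dependence of the transform into $C^{1+\alpha}$ continuous dependence of $\pi_\eps$.

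Composing, $T_\eps|_{I^\pm} = \pi_\eps\circ F_\eps|_{I^\pm}$ is $C^{1+\alpha}$ and varies continuously in the $C^{1+\alpha}$ topology on compact subsets of $I^\pm$, giving the piecewise $C^{1+\alpha}$ continuity of the family $\{T_\eps\}$. The main obstacle is this last step, namely $C^{1+\alpha}$ continuity of the holonomy in $\eps$: while $C^0$ continuity of the stable foliation under $C^1$ perturbations is classical, upgrading to $C^{1+\alpha}$ convergence of the holonomies requires uniform-in-$\eps$ control of the graph transform estimates underlying the $q$-bunching result in the singular hyperbolic setting, which is where the bulk of the technical work should lie.
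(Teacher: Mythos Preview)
Your decomposition $T_\eps = \pi_\eps \circ F_\eps|_I$ and the plan to control each factor separately is exactly the argument in the paper. The only substantive difference is how the hard step---$C^{1+\alpha}$ continuity in $\eps$ of the holonomy $\pi_\eps$---is handled: you propose to redo the graph-transform estimates with uniform-in-$\eps$ constants, whereas the paper simply invokes \cite{Bortolotti19}, which provides exactly the statement that $q$-bunching yields a \emph{continuous family} of $C^q$ stable foliations (not just $C^q$ for each fixed $\eps$). So your identification of where the work lies is accurate, but that work has already been done in the literature and the paper treats it as a black box.

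One minor point: you restrict to compact subsets of $I^\pm$, presumably out of caution about the blow-up of $T_\eps'$ at $\xi_0$. The paper does not dwell on this; its notion of ``continuous family of piecewise $C^{1+\alpha}$ maps'' is stated in terms of the global $\|\cdot\|_{1+\alpha}$ norm on each piece, and the proof is content to compose the continuous family $F_\eps$ with the continuous family of foliation charts without further comment. Whether one regards this as a genuine subtlety or a notational convention, your version is if anything more careful here, not less.
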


\begin{proof}
Recall that $T_\eps$ is obtained from the continuous family of piecewise $C^{1+\alpha}$ maps $F_\eps$ by quotienting along the stable foliation.
Our assumption of $q$-bunching ($q>1$) yields continuous families of $C^q$ stable foliations~\cite{Bortolotti19}.
Hence, $T_\eps$ is a continuous family of piecewise $C^{1+\alpha}$ maps.
\end{proof}

\section{Normalised geometric Lorenz attractors}
\label{sec:normal}

Let $X_\eps$, $\eps\ge0$, be  a continuous family of $C^2$ flows on $\R^3$ admitting a geometric Lorenz attractor.
In this section, we show how to normalise the families of flows to obtain simplified coordinates for carrying out the proofs.

Assume that the preliminary $C^2$ change of coordinates in section~\ref{family} has been performed.
Let $F_\eps:\Sigma\to\Sigma$ and $T_\eps:I\to I$ be the associated families of Poincar\'e maps and one-dimensional piecewise expanding maps.
Also, define $\tSigma =\left\{(x, y, 1):-\frac 1 2\le x,  y \le \frac 1 2 \right\}$.

\begin{proposition}   \label{omegaeps}
There exists a continuous family of $C^{1+\alpha}$ diffeomorphisms $\omega_\eps:\Sigma\to\Sigma$  such that
\begin{itemize}
\item[(a)] $\omega_\eps$ restricts to the identity on $I$; and 
\item[(b)] 
vertical lines in $\tSigma$
are transformed under $\omega_\eps$ into 
stable leaves for $F_\eps:\Sigma\to\Sigma$.
\end{itemize}
\end{proposition}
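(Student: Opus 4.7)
The plan is to construct $\omega_\eps$ as the canonical straightening of the stable foliation $\cF_\eps$ on $\tSigma$, and then patch smoothly to the identity outside $\tSigma$ via a cutoff function.

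For each $x\in[-\tfrac12,\tfrac12]$, let $W^s_\eps(x)\in\cF_\eps$ denote the stable leaf through $(x,0,1)\in I$. After the preliminary $C^2$ change of coordinates from subsection~\ref{family}, the stable leaves are sufficiently close to vertical that
$$W^s_\eps(x)\cap\tSigma = \{(\phi_\eps(x,y),\,y,\,1) : y\in[-\tfrac12,\tfrac12]\}$$
for a unique function $\phi_\eps$ satisfying $\phi_\eps(x,0)=x$. I would set $\omega_\eps(x,y,1)=(\phi_\eps(x,y),y,1)$ on $\tSigma$; property (a) is then immediate and (b) holds by construction. Joint $C^{1+\alpha}$ regularity of $\phi_\eps$ comes from the $C^{1+\alpha}$ structure of $\cF_\eps$ (indeed $C^q$ with $q>1$) established in subsection~\ref{GLA}: the $y$-regularity is that of the leaves themselves, while the $x$-regularity comes from the $C^{1+\alpha}$ holonomy of $\cF_\eps$ between $I$ and each horizontal transversal $\{y=\text{const}\}$. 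The Jacobian of $\omega_\eps$ is lower triangular with diagonal entries $(\partial_x\phi_\eps,1)$ and $\partial_x\phi_\eps(x,0)=1$, so (possibly after a further rescaling of $\tSigma$) $\omega_\eps|_{\tSigma}$ is a $C^{1+\alpha}$ diffeomorphism onto its image.

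To extend to $\Sigma$, I would pick a $C^\infty$ cutoff $\chi:\Sigma\to[0,1]$ with $\chi\equiv 1$ on $\tSigma$ and $\chi\equiv 0$ near $\partial\Sigma$, and define
$$\omega_\eps(x,y,1)=\bigl(x+\chi(x,y)(\phi_\eps(x,y)-x),\,y,\,1\bigr)$$
on all of $\Sigma$. Since $\phi_\eps(x,y)-x$ is uniformly small on $\operatorname{supp}\chi$, this remains a $C^{1+\alpha}$ diffeomorphism of $\Sigma$ that coincides with the straightening on $\tSigma$ and with the identity near $\partial\Sigma$; properties (a) and (b) persist. Continuity of $\{\omega_\eps\}$ in $\eps$ then follows from the continuity of the family of stable foliations $\cF_\eps$ provided by \cite{Bortolotti19}, which is the same input used in the proof of Theorem~\ref{Teps}.

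The hard part is the graph parametrisation in the first step: one must verify that every stable leaf through $I\cap\tSigma$ extends as a graph over the full range $y\in[-\tfrac12,\tfrac12]$ while remaining inside $\Sigma$. This reduces to a uniform (in $\eps$) bound on the slope of the stable leaves, which follows from the $C^{1+\alpha}$ regularity of $\cF_\eps$, from the fact that the hyperbolicity constants $K,\rho$ in~\eqref{eq:rho} are independent of $\eps$, and from the initial normalisation of coordinates in subsection~\ref{family}.
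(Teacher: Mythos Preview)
Your construction is correct and is essentially the same approach as the paper's: both obtain $\omega_\eps$ as the canonical straightening of the stable foliation $\cF_\eps$, with regularity inherited from the $C^q$ ($q>1$) smoothness of the foliation and continuity in $\eps$ supplied by~\cite{Bortolotti19}. The paper simply cites \cite[Lemma~4.9]{AM17} for the explicit formula rather than writing out the graph parametrisation and cutoff extension as you do, but the underlying idea is identical.
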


\begin{proof}  
For $\eps$ fixed, this follows by definition of the smoothness of the stable foliation for $F_\eps$.  
(An explicit formula is given in~\cite[Lemma~4.9]{AM17} where $Y$ and $\chi$ should be replaced by $I$ and $\omega_\eps$, and the embedding is the identity.)
Again, we obtain continuous families of $C^{1+\alpha}$ diffeomorphisms by~\cite{Bortolotti19}.
\end{proof}

The change of coordinates $\omega_\eps$ for the Poincar\'e map $F_\eps$ extends to a change of coordinates $\phi_\eps$ for the flow $X_\eps$.  The extension is standard and essentially unique, though heavy on notation.

First, define the transformed Poincar\'e map and return time
\[
\tilde F_\eps=\omega_\eps^{-1}\circ F_\eps\circ \omega_\eps:\tSigma\to\tSigma,
\qquad 
\tilde\tau_\eps=\tau_\eps\circ\omega_\eps:\tSigma\to(0,\infty).
\]

The set $U_\eps=\{(X_\eps(\xi,t):\xi\in\Sigma,\,0\le t\le\tau_\eps(\xi)\}$ defines a neighbourhood of the Lorenz attractor for $X_\eps$.
Note that $X_\eps(\xi,\tau_\eps(\xi))=F_\eps\xi$ for $\xi\in\Sigma$.
Define 
the transformed flow
$\tilde X_\eps:U_\eps\times[0,\infty)\to U_\eps$ given by
$\tilde X_\eps(x,t)=X_\eps(x,t)$ subject to the identifications
$\tilde X_\eps(\xi,\tilde \tau_\eps(\xi))=\tilde F_\eps\xi$ for $\xi\in\tSigma$.

Finally, define $\phi_\eps:U_\eps\to U_\eps$ by
$\phi_\eps(x)=X_\eps(\omega_\eps(\xi),t)$ for $x= X_\eps(\xi,t)\in U_\eps$, where $\xi\in \Sigma$ and $0\le t\le\tilde \tau_\eps(x)$.  
It follows from the definitions that 
$\phi_\eps|_{\Sigma_\eps}\equiv\omega_\eps$ and 
$\tilde X_\eps(x,t)=\phi_\eps^{-1}\circ X_\eps(\phi_\eps(x),t)$.  Hence $\phi_\eps$ is the desired change of coordinates.

\begin{corollary}   \label{phieps}
The changes of coordinates $\phi_\eps:U\to U$
are continuous families of $C^{1+\alpha}$ diffeomorphisms.
\end{corollary}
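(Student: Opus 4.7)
The plan is to verify directly that the explicit formula defining $\phi_\eps$, namely $\phi_\eps(x)=X_\eps(\omega_\eps(\xi),t)$ when $x=X_\eps(\xi,t)$ with $\xi\in\tSigma$ and $0\le t\le\tilde\tau_\eps(\xi)$, exhibits $\phi_\eps$ as a composition of ingredients whose regularity and $\eps$-continuity have already been established, together with a check that the definition is consistent across the identifications on $\tSigma$.

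First, I would invoke the flow-box theorem to write the inverse of the local parametrization $(\xi,t)\mapsto X_\eps(\xi,t)$ as a $C^2$ map $x\mapsto(\xi(x),t(x))$ on each flow box; this inverse depends continuously on $\eps$ in $C^2$ because $X_\eps$ is a continuous family of $C^2$ flows. Combined with Proposition~\ref{omegaeps}, this displays $\phi_\eps$ on the interior of each flow box as the composition of a $C^2$ map, the $C^{1+\alpha}$ map $\omega_\eps$, and a $C^2$ map, and therefore as a $C^{1+\alpha}$ map depending continuously on $\eps$ in the $C^{1+\alpha}$ topology.

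Second, I would check that the two parametrizations meeting at a point of $\tSigma$---one from below with $t$ approaching $\tilde\tau_\eps(\xi)$, and one from above with $t$ near $0$ after applying $\tilde F_\eps$---produce the same $C^{1+\alpha}$ germ of $\phi_\eps$. Agreement of values follows from the conjugacy $F_\eps\circ\omega_\eps=\omega_\eps\circ\tilde F_\eps$ implicit in the definition $\tilde F_\eps=\omega_\eps^{-1}\circ F_\eps\circ\omega_\eps$, together with $\tilde\tau_\eps=\tau_\eps\circ\omega_\eps$; agreement of derivatives uses that on each side $\phi_\eps$ intertwines the flows $\tilde X_\eps$ and $X_\eps$ by construction, so the tangent map is transported along orbits by the same linear cocycle on both sides. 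Near the singularity at $0$, where the flow box theorem fails, I would pass to the $C^2$ linearization underlying Proposition~\ref{tau}; since $\omega_\eps$ restricts to the identity on $I$, the formula extends continuously and in the $C^{1+\alpha}$ norm across a neighbourhood of $0$. The analogous construction with $\omega_\eps^{-1}$ produces an inverse, so $\phi_\eps$ is a diffeomorphism, and continuity of the family in $\eps$ then follows automatically from continuity of each constituent family.

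The main obstacle will be the gluing step: the $C^{1+\alpha}$ regularity of $\phi_\eps$ across $\tSigma$ is stronger than mere continuity of values and is not visible from the piecewise formula itself. I expect it to be forced entirely by the combination of the flow conjugation and the identity $F_\eps\circ\omega_\eps=\omega_\eps\circ\tilde F_\eps$, so that no estimates beyond those already contained in Propositions~\ref{tau} and~\ref{omegaeps} should be required.
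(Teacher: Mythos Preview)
Your proposal takes essentially the same approach as the paper, whose proof is two sentences: ``By assumption, $X_\eps$ is a continuous family of $C^2$ flows. Hence the result follows from Proposition~\ref{omegaeps}.'' You have simply spelled out the flow-box construction and the gluing verification that the paper leaves implicit.

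One small correction: the linearisation invoked in Proposition~\ref{tau} is Hartman--Grobman, hence only $C^0$, not $C^2$ as you write. This is not fatal to your outline, since the flow-box coordinates $(\xi,t)\mapsto X_\eps(\xi,t)$ are valid at every point where the vector field is nonzero, i.e.\ everywhere in $U_\eps$ except at the singularity itself; and since $\omega_\eps$ maps the vertical line through the origin to the stable leaf through the origin (namely $\Gamma$), the formula $\phi_\eps(X_\eps(\xi,t))=X_\eps(\omega_\eps(\xi),t)$ extends along the local stable manifold by the same mechanism as elsewhere, without recourse to any linearisation.
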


\begin{proof}
By assumption, $X_\eps$ is a continuous family of $C^2$ flows.
Hence the result follows from Proposition~\ref{omegaeps}.
\end{proof}

\begin{lemma} \label{normaleps}
The transformed 
flow $\tilde X_\eps(x,t)=\phi_\eps^{-1}\circ X_\eps(\phi_\eps(x),t)$ satisfies:
\begin{itemize}
\item[(i)] The Poincar\'e map $\tilde F_\eps:\tSigma\setminus\Gamma\to\tSigma$ 
is a continuous family of piecewise $C^{1+\alpha}$ diffeomorphisms and
has the form
\[
\tilde F_\eps(x,y)=(T_\eps x,g_\eps(x,y)),
\]
where $T_\eps:I\to I$ is the 
family of Lorenz-like expanding maps in Theorem~\ref{Teps}(a).
\item[(ii)]
		$
		|\tilde F_\eps^n\xi_1- \tilde F_\eps^n\xi_2|\le K\rho^n|\xi_1- \xi_2|
		$
		for all $\xi_1=(x,y_1),\,\xi_2=(x,y_2)$ and $n\ge 1,$ 
\item[(iii)] The return time $\tilde\tau_\eps:\tSigma\setminus\Gamma\to (0,\infty)$ 
is given by $\tilde\tau_\eps(x, y)=-\frac 1 {\lambda_{1,\eps}} \log|x|+\tilde\tau_{2,\eps}(x,y)$ where $\tilde\tau_{2,\eps}$ is a continuous family of 
 $C^{1+\alpha}$ functions.
\item[(iv)] The SRB measures $\tilde\mu_\eps$ for $\tilde X_\eps$ are given by
$\tilde\mu_\eps={\phi_{\eps}^{-1}}\!_*\mu_\eps$ and are statistically stable.
\end{itemize}
\end{lemma}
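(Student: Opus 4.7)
The plan is to derive each of the four items from the definitions of $\omega_\eps$ and $\phi_\eps$ combined with what is already known for $F_\eps$, $T_\eps$ and $\tau_\eps$; the first two items are essentially geometric bookkeeping, while (iii) carries the only real regularity subtlety and (iv) is a standard conjugation argument.

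For (i) I would observe that by Proposition~\ref{omegaeps}(b), vertical fibres of $\tSigma$ are mapped by $\omega_\eps$ onto stable leaves of $F_\eps$, and $F_\eps$ preserves the stable foliation; hence $\tilde F_\eps$ preserves the vertical foliation of $\tSigma$, so its first coordinate depends only on $x$ and $\tilde F_\eps(x,y)=(h_\eps(x),g_\eps(x,y))$. Restricting to $I=\{y=0\}$ and using $\omega_\eps|_I=\mathrm{id}$ together with the definition of $T_\eps$ as the quotient of $F_\eps$ along stable leaves identifies $h_\eps=T_\eps$; piecewise $C^{1+\alpha}$ regularity and continuity in $\eps$ are inherited from $F_\eps$ and $\omega_\eps$. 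For (ii), two points $\xi_i=(x,y_i)$ on one vertical fibre are sent by $\omega_\eps$ onto a common stable leaf for $F_\eps$, so~\eqref{eq:rho} (with $\eps$-uniform $K,\rho$) applies to $F_\eps^n\omega_\eps\xi_i$; transporting the inequality back by $\omega_\eps^{-1}$ and absorbing the uniform Lipschitz constants of $\omega_\eps^{\pm1}$ into a new $K$ gives the stated bound.

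For (iii), write $\omega_\eps(x,y)=(u_\eps(x,y),v_\eps(x,y))$. The vertical line $\{x=0\}$ is sent to the stable leaf $\Gamma=\{u=0\}$, giving $u_\eps(0,y)\equiv0$, and $\omega_\eps|_I=\mathrm{id}$ forces $u_\eps(x,0)=x$. Factor $u_\eps(x,y)=x\,r_\eps(x,y)$ via
\[
r_\eps(x,y)=\int_0^1\partial_x u_\eps(tx,y)\,dt,
\]
so that $r_\eps$ is positive, bounded away from $0$, and inherits the necessary regularity from $\omega_\eps$ (shrinking $\alpha$ below the foliation regularity if necessary). Proposition~\ref{tau} composed with $\omega_\eps$ then yields
\[
\tilde\tau_\eps(x,y)=-\tfrac{1}{\lambda_{1,\eps}}\log|u_\eps(x,y)|+\tau_{2,\eps}(\omega_\eps(x,y))=-\tfrac{1}{\lambda_{1,\eps}}\log|x|+\tilde\tau_{2,\eps}(x,y),
\]
with $\tilde\tau_{2,\eps}=-\lambda_{1,\eps}^{-1}\log|r_\eps|+\tau_{2,\eps}\circ\omega_\eps$ a continuous family of $C^{1+\alpha}$ functions by the properties of $r_\eps$ together with the composition of the $C^2$ map $\tau_{2,\eps}$ with the $C^{1+\alpha}$ diffeomorphism $\omega_\eps$.

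For (iv), since $\tilde X_\eps=\phi_\eps^{-1}\circ X_\eps\circ\phi_\eps$ is conjugate to $X_\eps$ via a diffeomorphism of a neighbourhood of the attractor, the push-forward $\tilde\mu_\eps={\phi_\eps^{-1}}_*\mu_\eps$ is $\tilde X_\eps$-invariant and, being the image of an SRB measure under this diffeomorphism, is itself SRB. For statistical stability, for $\psi\in C^0$ split
\[
\int\psi\,d\tilde\mu_\eps-\int\psi\,d\tilde\mu_0=\int(\psi\circ\phi_\eps^{-1}-\psi\circ\phi_0^{-1})\,d\mu_\eps+\int\psi\circ\phi_0^{-1}\,d(\mu_\eps-\mu_0);
\]
the first summand tends to $0$ because Corollary~\ref{phieps} gives $\phi_\eps^{-1}\to\phi_0^{-1}$ uniformly, and the second tends to $0$ by statistical stability of $\mu_\eps$ applied to the continuous observable $\psi\circ\phi_0^{-1}$. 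The principal obstacle is the regularity check in (iii): one must genuinely verify that $\log|r_\eps|$ absorbs into a $C^{1+\alpha}$ remainder and does not leave a Hölder-only residue, which is where the latitude to choose $\alpha$ strictly below the regularity of the stable foliation is used; every other ingredient is a direct unfolding of Proposition~\ref{omegaeps} and Corollary~\ref{phieps}.
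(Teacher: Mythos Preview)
Parts (i), (ii), and (iv) are correct and essentially match the paper's approach; for (iv) the paper instead argues that $\omega_\eps|_I=\mathrm{id}$ leaves the acip $\bar\mu_\eps$ on $I$ unchanged and then lifts via the standard construction of SRB measures from $\bar\mu_\eps$, but your direct conjugacy/splitting argument for statistical stability is equally valid.

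In (iii) you are in fact more careful than the paper, whose one-line proof writes $\tilde\tau_\eps=-\lambda_{1,\eps}^{-1}\log|x|+\tau_{2,\eps}\circ\omega_\eps$ as though the first coordinate $u_\eps$ of $\omega_\eps$ were identically $x$. Your Hadamard factorization $u_\eps=x\,r_\eps$ is the natural repair. There is, however, a genuine regularity gap in your argument: from $\omega_\eps\in C^{1+\alpha}$ (equivalently $C^q$ with $q>1$) the integral formula gives $\partial_x u_\eps\in C^{q-1}$ and hence $r_\eps,\ \log|r_\eps|\in C^{q-1}=C^\alpha$, not $C^{1+\alpha}$. ``Shrinking $\alpha$ below the foliation regularity'' cannot recover the lost derivative, since the paper only assumes $q>1$ (indeed $q<2$ for the classical Lorenz attractor). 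Thus your argument yields $\tilde\tau_{2,\eps}\in C^\alpha$, one degree short of the stated $C^{1+\alpha}$. This is really an overstatement in the lemma rather than a defect of your method: only H\"older regularity of the remainder is ever used downstream (see the proof of Corollary~\ref{cor:reg}), and for that your argument suffices.
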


\begin{proof}
(i)
By Proposition~\ref{omegaeps}(b),
$\tilde F_\eps=\omega_\eps^{-1}\circ F_\eps\circ\omega_\eps$ has the form $\tilde F_\eps(x,y)=(T_\eps x,g_\eps(x,y))$. 
By Proposition~\ref{omegaeps}(a), the
maps $T_\eps:I\to I$ are unchanged by this change of coordinates.
Also,  
$\omega_\eps$ is a continuous family of $C^{1+\alpha}$ diffeomorphisms
and $F_\eps$ is a continuous family of piecewise $C^{1+\alpha}$ diffeomorphisms, 
so $\tilde F_\eps$  a continuous family of piecewise $C^{1+\alpha}$ diffeomorphisms.

\vspace{1ex} \noindent (ii)
This is the uniform contraction condition~\eqref{eq:rho} in the new coordinates.

\vspace{1ex} \noindent
(iii) 
By Proposition~\ref{tau} and  Proposition~\ref{omegaeps},
$\tilde\tau_\eps=\tau_\eps\circ\omega_\eps=-\frac 1 {\lambda_{1,\eps}} \log|x|+\tau_{2,\eps}\circ\omega_\eps$ where
$\tilde\tau_{2,\eps}=\tau_{2,\eps}\circ\omega_\eps$ is a continuous family of 
$C^{1+\alpha}$ functions.

\vspace{1ex} \noindent (iv)
By Proposition~\ref{omegaeps}(a), the acip $\bar\mu_\eps$ for $T_\eps$ is unchanged by the change of coordinates and hence remains absolutely continuous.
Using this and the construction of the SRB measure (see for example~\cite{AS,APPV} for the standard construction of $\mu_\eps$ from $\bar\mu_\eps$),
we obtain that 
$\tilde\mu_\eps={\phi_\eps^{-1}}\!_*\mu_\eps$ is the SRB measure for $\tilde X_\eps$.
Moreover, strong statistical stability~\cite{AS, BR,GL} of the acips $\bar\mu_\eps$ on $I$ is preserved and hence the SRB measures $\tilde\mu_\eps$ remain statistically stable.  
\end{proof}

In the following sections, we prove Theorem~\ref{main3} for normalised families of geometric Lorenz attractors.  
At the end of section~\ref{varianceflow}, we show how results for normalised families imply 
Theorem~\ref{main3}.

\section{The family of one-dimensional maps}\label{Pre}

In this section, we recall some functional-analytic properties associated with the family of one-dimensional Lorenz maps $T_\eps$. 
For $p\ge 1$, we say $f:I \to \R$ is of (universally) bounded $p$-variation if 
	$$
	V_p(f)=
	\sup_{-\frac12= x_0<\dots<x_n= \frac12}
	\left(
	\sum_{i=1}^{n}\left| f(x_i)-f(x_{i-1})\right|^p
			\right)^{1/p} <\infty.
	$$ 
We take $p\ge\frac{1}{\alpha}$.

Let $S_\rho(x)=\{y\in I: |x-y|<\rho\}$.  For $f:I\to \R$, define
$$
 \osc(f, \rho, x)= \esssup\{|f(y_1)-f(y_2)|: y_1, y_2\in S_\rho(x)\}, 
 $$ 
and
$$ \osc_1(f, \rho)=\|\osc(f, \rho, x)\|_1,$$
where the essential supremum is taken with respect to Lebesgue measure on $I\times I$ and $\|\cdot\|_1$ is the $L^1$- norm  with respect to Lebesgue measure on $I.$ Fix $\rho_0>0$ and let $BV_{1, 1/p}\subset L^1$ be the Banach space equipped with the norm
	$$
	\|f\|_{1, 1/p}=V_{1, 1/p}(f)+\|f\|_1,
	\quad\text{where }\quad
	 V_{1, 1/p}(f)=\sup_{0<\rho\le \rho_0}\frac{\osc_1(f, \rho)}{\rho^{1/p}}.
	$$
(The space $BV_{1, 1/p}$ does not depend on $\rho_0$.) 
The fact that  $BV_{1, 1/p}$ is a Banach space is proved in \cite{K}. Moreover, it is proved in \cite{K} that $BV_{1, 1/p}$ is embedded in $L^\infty$ and compactly embedded in $L^1$.
In addition \cite{K} shows that
\begin{equation} \label{VV}
V_{1,1/p}(f)\le 2^{1/p}V_p(f).
\end{equation}

We recall results from the literature that we use later in sections \ref{app} and~\ref{varianceflow} of the paper. Recall from~\cite{K} that $T_\eps$ admits a unique acip $\bar\mu_\eps$ for each $\eps\ge0$. Let $h_\eps$ denote the density for $\bar\mu_\eps$. Let
$P_{\eps}:L^1(I)\to L^1(I)$
denote the transfer operator (Perron-Frobenius) associated with $T_\eps$
(so $\int P_\eps f\,g\,d\Leb=\int f\,g\circ T_\eps\,d\Leb$ for $f\in L^1(I)$, $g\in L^\infty(I)$).

\begin{proposition}\label{recap}\text{ }
There exists $C>0$ and $\Lambda\in(0,1)$ such that 
\begin{itemize}
\item[(a)] $||h_\eps||_\infty<C$, and
\item[(b)] 
$\| P^n_\eps f-h_\eps\int f\,d\Leb\|_{1,1/p}\le C\Lambda^n||f||_{1, 1/p}$
\end{itemize}
for all $n\ge1$, $\eps\ge0$, $f\in BV_{1,1/p}$.
\end{proposition}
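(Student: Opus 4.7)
The plan is to derive both conclusions from a uniform Lasota-Yorke inequality for the transfer operators $P_\eps$ on $BV_{1,1/p}$, combined with a Keller-Liverani type perturbation argument to upgrade the spectral gap from $\eps=0$ to a uniform gap for small $\eps$.

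First, I would establish a uniform Lasota-Yorke inequality of the form
\[
\|P_\eps^n f\|_{1,1/p}\le C_1\Lambda_1^n\|f\|_{1,1/p}+C_2\|f\|_1
\]
with $C_1,C_2>0$ and $\Lambda_1\in(0,1)$ independent of $\eps$.  This is the estimate carried out for a single Lorenz-like map in \cite{K} using the oscillation norm $V_{1,1/p}$; on each monotone branch of $T_\eps^n$ the contribution to $V_{1,1/p}(P_\eps^n f)$ is controlled by $\sup (T_\eps^n)'^{-1/p}$ which decays like $\theta^{-n/p}$ by Proposition~\ref{T}, while the $L^1$ remainder absorbs the distortion term coming from $\log(T_\eps)'$ and the finitely many branch endpoints.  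All quantities entering this calculation — the uniform expansion $\theta>1$, the H\"older seminorm of $\log(T_\eps)'$ on each branch, and the location of the singularity $\xi_0$ — are continuous in $\eps$ by Theorem~\ref{Teps}, and hence uniformly bounded on a neighbourhood of $\eps=0$.  Outside such a neighbourhood there is nothing to prove, so $C_1,C_2,\Lambda_1$ may be taken independent of $\eps$.

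Part (a) follows directly:  iterating the Lasota-Yorke inequality on the fixed point $h_\eps=P_\eps^n h_\eps$ with $\|h_\eps\|_1=1$ and choosing $n$ so large that $C_1\Lambda_1^n\le\tfrac12$ yields $\|h_\eps\|_{1,1/p}\le 2C_2$.  Since $BV_{1,1/p}$ embeds continuously in $L^\infty$ with uniform constant (a fact recorded in \cite{K}), this gives the uniform bound $\|h_\eps\|_\infty\le C$.

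For part (b), each $P_\eps$ is quasi-compact on $BV_{1,1/p}$ by Hennion's theorem, using the uniform Lasota-Yorke inequality and the compact embedding $BV_{1,1/p}\hookrightarrow L^1$.  Transitivity of $T_0$ and uniqueness of the acip $\bar\mu_0$ imply that for $\eps=0$ the eigenvalue $1$ is simple and the rest of the spectrum of $P_0$ lies in a disk of radius $<1$.  To transfer this to a spectral gap uniform in $\eps$, I would invoke Keller-Liverani:  the family $\eps\mapsto P_\eps$ is continuous viewed as operators $BV_{1,1/p}\to L^1$, because continuity of $T_\eps$ in the piecewise $C^{1+\alpha}$ topology (Theorem~\ref{Teps}) yields a bound of the form $|P_\eps f-P_0 f|_1\le \eta(\eps)\|f\|_{1,1/p}$ with $\eta(\eps)\to0$.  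Combined with the uniform Lasota-Yorke inequality, this forces the spectral projector onto the eigenvalue $1$ and the gap to be continuous in $\eps$, giving a uniform $\Lambda\in(0,1)$ and $C>0$ so that $\|P_\eps^n(f-h_\eps\int f\,d\Leb)\|_{1,1/p}\le C\Lambda^n\|f\|_{1,1/p}$.

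The main obstacle is the $BV_{1,1/p}\to L^1$ continuity estimate $|P_\eps-P_0|_{BV_{1,1/p}\to L^1}\to 0$ needed for Keller-Liverani.  This is more delicate than the Lasota-Yorke calculation because the discontinuity loci of $T_\eps$ and $T_0$ at $\xi_0$ coincide by our normalisation, but the branches themselves and their inverse Jacobians only agree up to an $\eps$-dependent perturbation in $C^{1+\alpha}$; one has to track how this perturbs $P_\eps f$ branch-by-branch and use the $L^1$ control from the oscillation norm to swallow the resulting error.  Once this technical continuity estimate is in hand, the rest of the argument is a direct application of the standard quasi-compactness and perturbation machinery.
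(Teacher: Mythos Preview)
Your proposal is correct and follows essentially the same route as the paper: a uniform Lasota--Yorke inequality gives part~(a) by iterating on $h_\eps$ and using the embedding $BV_{1,1/p}\hookrightarrow L^\infty$, while part~(b) follows from Keller--Liverani stability once the $BV_{1,1/p}\to L^1$ continuity $\sup_{\|f\|_{1,1/p}\le1}\|P_\eps f-P_0 f\|_1\to0$ is established. The paper simply cites \cite[Lemma~3.3]{BR} for the uniform Lasota--Yorke inequality and \cite{BR} for the continuity estimate (precisely the ``main obstacle'' you flag), rather than re-deriving them, but the logical skeleton is identical.
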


\begin{proof}
By~\cite[Lemma~3.3]{BR}, there exist $A_1, A_2>0$, $0<\kappa<1$, such that for all $n\ge1$, $f\in BV_{1, 1/p}$,
$$||P^n_\eps f||_{1,1/p}\le A_1\kappa^n||f||_{1, 1/p}+A_2||f||_1.$$
Taking $f=h_\eps$ and letting $n\to\infty$, we obtain 
$\|h_\eps\|_{1,1/p}\le A_2$ and
part~(a) follows.
Moreover, it was proved in \cite{BR} that
$$\lim_{\eps\to 0}\sup_{||f||_{1,1/p}\le 1}||(P_\eps-P_0)f||_1=0.$$
Thus, the Keller-Liverani stability result \cite{KL} implies that $P_\eps$ has a uniform (in $\eps$) spectral gap on $BV_{1,1/p}$. This proves part~(b). 
\end{proof}

\section{Variance continuity for the Poincar\'e maps}\label{app}

In this section, we prove the analogue of Theorem~\ref{main3} at the level of the Poincar\'e maps $F_\eps$.
Throughout, we work with normalised families as in section~\ref{sec:normal}.

It is well known~\cite{APPV} that $F_\eps$ admits a unique SRB measure $\mu_{F_\eps}$ for each $\eps\ge0$. Moreover, for any continuous $\psi:\Sigma\to\R$ we have
$$\lim_{\eps\to 0}\int\psi\, d\mu_{F_\eps}=\int\psi\, d\mu_{F_0};$$
 i.e.\ the family $F_\eps$ is statistically stable \cite{AS, BR,GL}. 
We require the following strengthening of this property.
In general, we say that $\Psi:\Sigma\to\R$ is piecewise continuous if it is uniformly continuous on the connected components of $\Sigma\setminus\Gamma$.  
Similarly
$\Psi$ is piecewise H\"older if it is uniformly H\"older on the connected components of $\Sigma\setminus\Gamma$.  

\begin{proposition} \label{ss}
Suppose that $\Psi:\Sigma\to\R$ is piecewise continuous and fix $n\ge0$.
Then $\lim_{\eps\to0}\int \Psi\cdot(\Psi\circ F_0^n)\,d\mu_{F_\eps}=\int\Psi\cdot(\Psi\circ F_0^n)\,d\mu_{F_0}$.
\end{proposition}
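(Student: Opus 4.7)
Write $G=\Psi\cdot(\Psi\circ F_0^n)$. This function is bounded (piecewise continuous on a compact set) and continuous off the set $D_n=\bigcup_{j=0}^n F_0^{-j}\Gamma$. In the normalised coordinates of Section~\ref{sec:normal}, $\Gamma=\{x=0\}\times[-\tfrac12,\tfrac12]$ is vertical, and since $F_0(x,y)=(T_0x,g_0(x,y))$ we have $F_0^{-j}\Gamma=T_0^{-j}(0)\times[-\tfrac12,\tfrac12]$. Because $T_0$ is piecewise monotone with finitely many branches, $T_0^{-j}(0)$ is finite and so $D_n$ is a finite union of vertical segments. My plan is to approximate $G$ in $L^1(\mu_{F_\eps})$, uniformly in $\eps$, by continuous functions $G_\delta$ supported away from $D_n$, and then apply the usual statistical stability on $G_\delta$.

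\textbf{Uniform thin-tube estimate.} Let $\pi:\Sigma\to I$ be projection onto the first coordinate and let $A_\delta$ be the $\delta$-neighbourhood (in the $x$-coordinate) of $D_n$. The crucial point is that $\pi_\ast\mu_{F_\eps}=\bar\mu_\eps$: indeed $\pi$ is the quotient along the vertical stable leaves of $F_\eps$, so $\pi_\ast\mu_{F_\eps}$ is $T_\eps$-invariant and absolutely continuous, and uniqueness in Proposition~\ref{recap} forces it to equal $\bar\mu_\eps$. Combining this with the uniform bound $\|h_\eps\|_\infty\le C$ from Proposition~\ref{recap}(a), I obtain
\[
\mu_{F_\eps}(A_\delta)=\bar\mu_\eps\bigl(\pi A_\delta\bigr)\le C\,\Leb(\pi A_\delta)\le C'\delta
\]
with $C'$ independent of $\eps$.

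\textbf{Approximation and conclusion.} For each $\delta>0$, uniform continuity of $\Psi$ on each connected component of $\Sigma\setminus\Gamma$ (and hence of $G$ on each component of $\Sigma\setminus D_n$) lets me extend $G$ continuously to the closure of each piece and glue the pieces together via a continuous cutoff $\chi_\delta$ vanishing on $D_n$ and equal to $1$ off $A_\delta$; this produces a continuous $G_\delta:\Sigma\to\R$ with $|G_\delta|_\infty\le|G|_\infty$ and $G_\delta=G$ on $\Sigma\setminus A_\delta$. The classical statistical stability of the Poincar\'e maps (cited just before the proposition, from \cite{AS,BR,GL}), applied to the continuous function $G_\delta$, gives $\int G_\delta\,d\mu_{F_\eps}\to\int G_\delta\,d\mu_{F_0}$. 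Combining with the tube estimate applied to both $\mu_{F_\eps}$ and $\mu_{F_0}$,
\[
\limsup_{\eps\to 0}\Bigl|\int G\,d\mu_{F_\eps}-\int G\,d\mu_{F_0}\Bigr|\le 4|G|_\infty C'\delta,
\]
and letting $\delta\to 0$ finishes the proof.

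\textbf{Main obstacle.} Everything except the identification $\pi_\ast\mu_{F_\eps}=\bar\mu_\eps$ together with the uniform-in-$\eps$ density bound is soft approximation; the real content is that both pieces of input from Proposition~\ref{recap} (existence/uniqueness of $\bar\mu_\eps$ and uniform $\|h_\eps\|_\infty$) are available simultaneously, which is precisely what makes the thin-tube measure estimate hold uniformly in $\eps$.
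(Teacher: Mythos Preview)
Your argument is correct. The paper's own proof is a one-line deferral to \cite[Proposition~3.3]{AS}, noting that the argument there already accommodates finitely many visits to $\Gamma$; you have instead written out a self-contained version of precisely that mechanism. Your thin-tube estimate $\mu_{F_\eps}(A_\delta)\le C'\delta$ via $\pi_*\mu_{F_\eps}=\bar\mu_\eps$ and Proposition~\ref{recap}(a) is exactly the kind of bound that underlies both the \cite{AS} argument and the paper's later Lemma~\ref{Galatolo1.5} (where the same density bound is used, though there the transfer from $\mu_{F_0}$ to $\mu_{F_\eps}$ is done via statistical stability of a continuous bump rather than by invoking the uniform bound on $h_\eps$ directly). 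So the route is the same in substance; your version is simply more explicit, and your direct use of the uniform-in-$\eps$ density bound is marginally cleaner than the two-step transfer used in Lemma~\ref{Galatolo1.5}.
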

\begin{proof} 
If $\Psi\cdot(\Psi\circ F_0^n)$ were continuous, this would be immediate from the statement of Proposition 3.3 in~\cite{AS}.  The proof in~\cite{AS} already accounts for trajectories that visit $\Gamma$ in finitely many steps, and it is easily checked that the same arguments apply here.
\end{proof}

For fixed $\eps\ge0$, the CLT holds (see for instance \cite{MN}); i.e.\ for $\Psi:\Sigma\to\R$ piecewise H\"older there exists
$\sigma^2=\sigma^2_{F_\eps}(\Psi)$ such that
$$\frac{1}{\sqrt{n}}\left(\sum_{i=0}^{n-1}\Psi\circ F_\eps^i-n\int\Psi\, d\mu_{F_\eps}\right){\overset{\text{law}}{\longrightarrow}}\mathcal N(0, \sigma^2)
\quad\text{as $n\to\infty$}.$$
The variance  
satisfies the Green-Kubo formula 
\begin{equation}\label{Kubo}
\sigma^2=\int\hat\Psi_\eps^2\,d\mu_{F_\eps}+2\sum_{n=1}^{\infty}\int\hat\Psi_\eps \cdot (\hat\Psi_\eps\circ F^n_{\eps})\, d\mu_{F_\eps},
\end{equation}
where
$\hat\Psi_\eps=\Psi_\eps-\int\Psi_\eps\, d\mu_{F_\eps}$.

\subsection{Uniform decay of correlations}
Let $\alpha\in(0,1]$ and 
$p\ge \frac{1}{\alpha}$.
Let $\Psi:\Sigma\to\R$ be piecewise H\"older with exponent $\alpha$. 
Set 
$$||\Psi||_{\alpha,s}=H_{\alpha,s}(\Psi)+||\Psi||_{\infty}, \qquad
H_{\alpha,s}(\Psi)=\sup_{\underset{y_1\not=y_2}{x,y_1,y_2\in I}}\frac{|\Psi(x,y_2)-\Psi(x,y_1)|}{|y_2-y_1|^\alpha}.$$

For $-\frac12=x_0< \dots< x_n=\frac12$ and $y_i\in I$, $1\le i\le n$, we define
$$\hat V_p(\Psi;(x_0,\dots,x_n); (y_1,\dots,y_n))= \Big(\sum_{1\le i\le n}|\Psi(x_{i-1},y_i)-\Psi(x_{i}, y_i)|^p\Big)^{1/p},$$
and 
$$\hat V_p(\Psi)=\sup \hat V_p(\Psi;(x_0,\dots,x_n); (y_1,\dots,y_n)),$$
where the supremum is taken over all finite partitions of $I$ and all choices of $y_i\in I$. Finally, we define
$$(\Pi \Psi)(x)=\int_I\Psi(x,y)\,dy.$$
We state and prove the following Theorem about uniform (in $\eps$) decay of correlations. 
Define  
$$
D_{\Psi}=||\Pi\Psi||_{1,1/p}+\hat V_p(\Psi)+||\Psi||_{\alpha,s}.
$$
Note that $D_\Psi$ is finite.

\begin{theorem}\label{GalatoloThm} Assume that there exists $K>0$ such that 
\begin{equation}\label{GalatoloAss}
||\Pi(\Psi\circ F_\eps^j)||_{1,1/p}+\hat V_p(\Psi\circ F_\eps^j)\le  D_\Psi K^j 
\end{equation}
for all $j\ge 1$, $\eps\ge0$.
Then there exist $C>0$ and $\theta\in(0,1)$ such that 
$$
\begin{aligned}
&\left|\int\Psi\cdot(\Psi\circ F_\eps^n)\, d\mu_{F_\eps}-\Big(\int\Psi\, d\mu_{F_\eps}\Big)^2\right| 
&\le C D_{\Psi}||\Psi||_{\alpha,s} \theta^n 
\end{aligned}
$$
for all $n\ge1$, $\eps\ge0$.
\end{theorem}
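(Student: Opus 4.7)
The approach is a Galatolo-style two-scale reduction of the 2D correlation for $F_\eps$ to a 1D correlation for the factor map $T_\eps$, for which the uniform spectral gap in Proposition~\ref{recap} is already available.  Split $n = k + m$ with parameters to be balanced at the end, and write $\Psi \circ F_\eps^n = (\Psi \circ F_\eps^k) \circ F_\eps^m$.

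For the first step, Lemma~\ref{normaleps}(ii) gives $|F_\eps^k\xi_1-F_\eps^k\xi_2|\le K\rho^k$ for $\xi_1,\xi_2$ on a common stable leaf; combining with the transverse H\"older control $H_{\alpha,s}(\Psi)$ yields
\[
\|\Psi\circ F_\eps^k - \phi_k\circ\pi\|_\infty \le C\|\Psi\|_{\alpha,s}\rho^{k\alpha},
\]
where $\phi_k = \Pi(\Psi\circ F_\eps^k)$ and $\pi(x,y)=x$.  By hypothesis~\eqref{GalatoloAss}, $\|\phi_k\|_{1,1/p}\le D_\Psi K^k$, and the embedding $BV_{1,1/p}\hookrightarrow L^\infty$ gives $\|\phi_k\|_\infty\le CD_\Psi K^k$.

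For the second step, using $\pi\circ F_\eps = T_\eps\circ\pi$ together with the approximation above yields
\[
\int\Psi(\Psi\circ F_\eps^n)\,d\mu_{F_\eps} = \int\Psi\cdot(\phi_k\circ T_\eps^m\circ\pi)\,d\mu_{F_\eps} + O\bigl(\|\Psi\|_\infty\|\Psi\|_{\alpha,s}\rho^{k\alpha}\bigr).
\]
Since the second factor depends on $\xi$ only through $\pi(\xi)$, the main integral collapses onto $I$: letting $G_\eps$ denote the density with respect to Lebesgue of the signed measure $\pi_*(\Psi\,\mu_{F_\eps})$, the main term equals $\int(\phi_k\circ T_\eps^m)G_\eps\,d\Leb = \int\phi_k\,P_\eps^m G_\eps\,d\Leb$.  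Proposition~\ref{recap}(b) then gives
\[
\int\phi_k\,P_\eps^m G_\eps\,d\Leb = \Bigl(\int G_\eps\,d\Leb\Bigr)\Bigl(\int\phi_k h_\eps\,d\Leb\Bigr) + O\bigl(\Lambda^m\|G_\eps\|_{1,1/p}\|\phi_k\|_\infty\bigr).
\]
A final appeal to Step~1 identifies $\int G_\eps\,d\Leb$ with $\int\Psi\,d\mu_{F_\eps}$ exactly and $\int\phi_k h_\eps\,d\Leb$ with $\int\Psi\,d\mu_{F_\eps}$ up to $O(\rho^{k\alpha})$.  Choosing $k=\gamma n$ and $m=(1-\gamma)n$ with $\gamma\in(0,1)$ small enough that $K^\gamma\Lambda^{1-\gamma}<1$ makes the accumulated errors $\rho^{k\alpha}$ and $D_\Psi K^k\Lambda^m$ decay at a common geometric rate $\theta^n$.

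The main obstacle is showing that the coupling weight $G_\eps$ lies uniformly in $BV_{1,1/p}$ with norm bounded by a fixed multiple of $D_\Psi$.  This is exactly where the $\hat V_p$-bound in~\eqref{GalatoloAss} enters decisively: combined with the transverse H\"older control $H_{\alpha,s}(\Psi)$ and the uniform estimate $\|h_\eps\|_{1,1/p}\le C$ implicit in the proof of Proposition~\ref{recap}(a), it provides the 2D-to-1D bridge needed to push the spectral-gap argument through with constants independent of $\eps$.
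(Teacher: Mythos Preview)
The paper's own proof is a one-line citation of \cite[Theorem~3]{AGP} together with the remark that the constants there can be taken uniform in~$\eps$ because the inputs (Proposition~\ref{recap}(b) and the fibre contraction in Lemma~\ref{normaleps}(ii)) are uniform. Your proposal is therefore not a comparison of two proofs so much as an attempt to unpack what the cited theorem actually does; the two-scale scheme $n=k+m$, the replacement of $\Psi\circ F_\eps^k$ by $\phi_k\circ\pi$, and the appeal to the one-dimensional spectral gap are indeed the skeleton of the Ara\'ujo--Galatolo--Pac\'ifico argument.

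There is, however, a real gap at the step involving $G_\eps$. By definition $G_\eps(x)=h_\eps(x)\int\Psi(x,y)\,d\nu_x(y)$, where $\nu_x$ is the conditional of the SRB measure $\mu_{F_\eps}$ on the fibre $\{x\}\times I$; these conditionals are \emph{not} Lebesgue, and the quantity $\hat V_p(\Psi)$ in~\eqref{GalatoloAss} only controls variation of $\Psi$ in $x$ at fixed~$y$, not the dependence $x\mapsto\nu_x$. Thus the ingredients you list ($\hat V_p(\Psi)$, $H_{\alpha,s}(\Psi)$, $\|h_\eps\|_{1,1/p}$) do not by themselves yield $\|G_\eps\|_{1,1/p}\lesssim D_\Psi$; one also needs Wasserstein-type H\"older regularity of the disintegration $x\mapsto\nu_x$, which is a separate (though true) fact about SRB measures for contracting skew products and is exactly what \cite{AGP,GL} establish. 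A clean way to bypass this is to use invariance first: write $\int\Psi\,(\Psi\circ F_\eps^n)\,d\mu_{F_\eps}=\int(\Psi\circ F_\eps^k)\,(\Psi\circ F_\eps^{n+k})\,d\mu_{F_\eps}$, approximate the \emph{first} factor by $\phi_k\circ\pi$, and then the pushforward weight becomes $h_\eps\phi_k$ rather than $G_\eps$. Since $\|\phi_k\|_{1,1/p}\le D_\Psi K^k$ is precisely hypothesis~\eqref{GalatoloAss} and $h_\eps$ is uniformly in $BV_{1,1/p}$, the spectral-gap step then goes through with constants controlled as claimed.
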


\begin{proof}
The proof follows from~\cite[Theorem~3]{AGP}. The fact that $C$ and $\theta$ do not depend on $\eps$ follows from the uniformity of the constants in Proposition \ref{recap}(b) and the uniformity of the contraction on vertical fibres. 
\end{proof}
We now verify assumption \eqref{GalatoloAss} in Theorem \ref{GalatoloThm}.

\begin{lemma}\label{Galatolo1} 
$V_{1,1/p}(\Pi \Psi)\le 2^{1/p}\hat V_p(\Psi)$.
\end{lemma}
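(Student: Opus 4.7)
The plan is to reduce the claim to the inequality $V_p(\Pi\Psi)\le \hat V_p(\Psi)$ and then invoke \eqref{VV}, which gives $V_{1,1/p}(\Pi\Psi)\le 2^{1/p}V_p(\Pi\Psi)\le 2^{1/p}\hat V_p(\Psi)$. So the real work is to bound the ordinary $p$-variation of the fibrewise integral $\Pi\Psi$ by $\hat V_p(\Psi)$.

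Fix a partition $-\tfrac12=x_0<x_1<\dots<x_n=\tfrac12$. By linearity and the triangle inequality,
\[
|(\Pi\Psi)(x_i)-(\Pi\Psi)(x_{i-1})|
=\Bigl|\int_I\bigl[\Psi(x_i,y)-\Psi(x_{i-1},y)\bigr]\,dy\Bigr|
\le \int_I |\Psi(x_i,y)-\Psi(x_{i-1},y)|\,dy.
\]
Raising to the $p$-th power, summing over $i$, and taking the $p$-th root, I would apply Minkowski's integral inequality (pulling the $\ell^p$ norm inside the $L^1(I)$ integral) to obtain
\[
\Bigl(\sum_{i=1}^n |(\Pi\Psi)(x_i)-(\Pi\Psi)(x_{i-1})|^p\Bigr)^{1/p}
\le \int_I\Bigl(\sum_{i=1}^n|\Psi(x_i,y)-\Psi(x_{i-1},y)|^p\Bigr)^{1/p}\,dy.
\]

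For each fixed $y\in I$, the choice $y_1=y_2=\cdots=y_n=y$ is an admissible selection of sample points in the definition of $\hat V_p(\Psi)$, so the inner sum is $\le \hat V_p(\Psi)^p$. Since $|I|=1$, the right-hand side is at most $\hat V_p(\Psi)$. Taking the supremum over all partitions gives $V_p(\Pi\Psi)\le \hat V_p(\Psi)$, and combining with \eqref{VV} yields the lemma.

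I do not anticipate any real obstacle: the proof is a clean application of Minkowski's inequality for integrals together with the built-in freedom to choose the $y_i$ in the definition of $\hat V_p$. The only thing to be slightly careful about is using the normalisation $|I|=1$ implicit in $I=[-\tfrac12,\tfrac12]$ so that the final integral in $y$ does not introduce a constant.
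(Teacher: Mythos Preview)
Your proof is correct and follows essentially the same route as the paper: both reduce to showing $V_p(\Pi\Psi)\le \hat V_p(\Psi)$ and then invoke \eqref{VV}. The only cosmetic difference is that the paper uses Jensen's inequality (convexity of $|\cdot|^p$ together with $|I|=1$) in place of your Minkowski integral inequality, arriving at the pointwise-in-$y$ bound $\sum_i|\Psi(x_{i-1},y)-\Psi(x_i,y)|^p\le \hat V_p(\Psi)^p$ just as you do.
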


\begin{proof}
Fix $y\in I$ and let $x_0<\dots <x_n$ be a partition of $I$. We have
\[
\begin{aligned}
\sum_{1\le i\le n}|\Pi\Psi(x_{i-1})-\Pi\Psi(x_{i})|^p
& = \sum_{1\le i\le n}\Big|\int (\Psi(x_{i-1},y)-\Psi(x_{i}, y))\,dy\Big|^p
\\ & \le \sum_{1\le i\le n}\int |\Psi(x_{i-1},y)-\Psi(x_{i}, y)|^p \,dy
  \le \hat V_p(\Psi)^p.
\end{aligned}
\]
Combining this with~\eqref{VV} we have
$V_{1,1/p}(\Pi\Psi)\le 2^{1/p}V_p(\Pi\Psi)\le 2^{1/p}\hat V_p(\Psi)$ as required.
\end{proof}

Recall that $F_\eps$ can be written in coordinates as $F_\eps(x,y)=(T_\eps x,g_\eps(x,y))$.  

\begin{lemma}\label{Galatolo2}
Let $M=4(1+\sup_\eps\|g_\eps\|_{C^1}^{\alpha})$. 
Then 
$$\hat V_p(\Psi\circ F_\eps^j)\le (2^j-1)M||\Psi||_{\alpha,s}+2^j\hat V_p(\Psi)
\quad\text{for all $j\ge1$, $\eps\ge0$.}$$
\end{lemma}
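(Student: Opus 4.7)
I will argue by induction on $j$. The geometric factor $2^j$ on the right-hand side matches the doubling (at each step) of the number of monotonicity branches of $T_\eps^j$, and the constant term $(2^j-1)M\|\Psi\|_{\alpha,s}$ accounts for the accumulated contributions from the corresponding break-points. The base case $j=1$ contains the main geometric idea; the inductive step is a clean recursion modulo controlling $\|\Psi\circ F_\eps^j\|_{\alpha,s}$.

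\textbf{Base case ($j=1$).} Fix a partition $-\tfrac12=x_0<\dots<x_n=\tfrac12$ and values $y_1,\dots,y_n\in I$. Using the skew-product form $F_\eps(x,y)=(T_\eps x,g_\eps(x,y))$, insert the intermediate point $(T_\eps x_{i-1},g_\eps(x_i,y_i))$ to split
\[
|\Psi\circ F_\eps(x_{i-1},y_i)-\Psi\circ F_\eps(x_i,y_i)|\le A_i+B_i,
\]
where $A_i$ measures the change in the second coordinate at first coordinate $T_\eps x_{i-1}$, and $B_i$ measures the change in the first coordinate at second coordinate $g_\eps(x_i,y_i)$. For $A_i$, H\"older regularity of $\Psi$ in $y$ combined with the Lipschitz regularity of $g_\eps$ in $x$ gives
$A_i\le H_{\alpha,s}(\Psi)\|g_\eps\|_{C^1}^{\alpha}|x_{i-1}-x_i|^{\alpha}$; since $\alpha p\ge 1$ and $|x_{i-1}-x_i|\le 1$, the sum $\sum_i|x_{i-1}-x_i|^{\alpha p}$ is bounded by $1$, so $(\sum_i A_i^p)^{1/p}\le \|g_\eps\|_{C^1}^{\alpha}\|\Psi\|_{\alpha,s}$. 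For $B_i$, let $k$ be the (unique, if any) index with $\xi_0\in(x_{k-1},x_k)$; then $T_\eps$ is monotone on $(x_0,\dots,x_{k-1})$ and on $(x_k,\dots,x_n)$, so the resulting $T_\eps x_i$'s form two monotone sequences and $\sum_{i\ne k}B_i^p\le 2\hat V_p(\Psi)^p$, while $B_k\le 2\|\Psi\|_\infty$. Minkowski then yields
\[
\hat V_p(\Psi\circ F_\eps)\le \|g_\eps\|_{C^1}^\alpha\|\Psi\|_{\alpha,s}+2^{1/p}\hat V_p(\Psi)+2\|\Psi\|_\infty\le M\|\Psi\|_{\alpha,s}+2\hat V_p(\Psi),
\]
by the choice of $M$.

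\textbf{Inductive step.} Writing $\Psi\circ F_\eps^{j+1}=(\Psi\circ F_\eps^{j})\circ F_\eps$ and applying the base case with $\Psi$ replaced by $\Psi\circ F_\eps^j$,
\[
\hat V_p(\Psi\circ F_\eps^{j+1})\le M\|\Psi\circ F_\eps^j\|_{\alpha,s}+2\hat V_p(\Psi\circ F_\eps^j).
\]
Here the key point is that $F_\eps^j$ fixes the $x$-coordinate when one varies only $y$, so the $H_{\alpha,s}$-seminorm of $\Psi\circ F_\eps^j$ is controlled by the contraction on vertical fibres. By Lemma~\ref{normaleps}(ii), for any two points with the same $x$-coordinate one has $|F_\eps^j(x,y_1)-F_\eps^j(x,y_2)|\le K\rho^j|y_1-y_2|$, so $H_{\alpha,s}(\Psi\circ F_\eps^j)\le (K\rho^j)^\alpha H_{\alpha,s}(\Psi)$, giving $\|\Psi\circ F_\eps^j\|_{\alpha,s}\le \|\Psi\|_{\alpha,s}$ (the constant $K\rho^j$ is $\le 1$; this can be arranged in the choice of $K,\rho$, or otherwise absorbed into $M$). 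Together with the inductive hypothesis this yields
\[
\hat V_p(\Psi\circ F_\eps^{j+1})\le M\|\Psi\|_{\alpha,s}+2(2^j-1)M\|\Psi\|_{\alpha,s}+2^{j+1}\hat V_p(\Psi)=(2^{j+1}-1)M\|\Psi\|_{\alpha,s}+2^{j+1}\hat V_p(\Psi),
\]
which closes the induction.

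\textbf{Main obstacle.} The only subtlety is justifying the uniform-in-$j$ bound $\|\Psi\circ F_\eps^j\|_{\alpha,s}\le \|\Psi\|_{\alpha,s}$ that makes the recursion close without loss. This relies crucially on the fact that the H\"older seminorm is taken along vertical fibres only, where $F_\eps^j$ contracts uniformly by Lemma~\ref{normaleps}(ii) with constants independent of $\eps$. The same contraction is also what makes the definition of $M$ depend only on $\sup_\eps\|g_\eps\|_{C^1}^\alpha$ and not on $j$.
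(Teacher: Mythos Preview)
Your proof is correct and follows essentially the same approach as the paper: establish the one-step recursion $\hat V_p(\Psi\circ F_\eps)\le M\|\Psi\|_{\alpha,s}+2\hat V_p(\Psi)$ by splitting through an intermediate point, handling the $y$-variation via the Lipschitz bound on $g_\eps$ and the $x$-variation via the two monotone branches of $T_\eps$ plus the single jump, and then iterate using $\|\Psi\circ F_\eps\|_{\alpha,s}\le\|\Psi\|_{\alpha,s}$. The only cosmetic differences are that the paper uses the inequality $(a+b)^p\le 2^{p-1}(a^p+b^p)$ in place of your Minkowski step and chooses the intermediate point $(T_\eps x_i,g_\eps(x_{i-1},y_i))$ rather than $(T_\eps x_{i-1},g_\eps(x_i,y_i))$; also, the paper simply asserts $\|\Psi\circ F\|_{\alpha,s}\le\|\Psi\|_{\alpha,s}$ whereas you (reasonably) flag the role of the fibre contraction constant.
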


\begin{proof}
We suppress the dependence on $\eps$.
Fix $-\frac12=x_0\le x_1\le \cdots\le x_n=\frac12$ and $y_i\in I$, $1\le i\le n$.  Then
\begin{align*}
&\hat V_p(\Psi\circ F;(x_0,\dots,x_n); (y_1,\dots,y_n))^p
=\sum_{1\le i\le n}|\Psi\circ F(x_{i-1},y_i)-\Psi\circ F(x_{i}, y_i)|^p\\
&\le2^{p-1}\sum_{1\le i\le n}H_{\alpha,s}(\Psi)^p|g(x_{i-1},y_i)-g(x_{i},y_i)|^{\alpha p}\\ 
& \qquad +2^{p-1}\sum_{1\le i\le n}|\Psi(T x_{i-1},g(x_{i-1},y_i))-\Psi(T x_{i},g(x_{i-1},y_i))|^p\\
&\le 2^{p-1}H_{\alpha,s}(\Psi)^p 
{\|g\|}_{C^1}^{\alpha p} \\
& \qquad + 2^{p-1}\sum_{1\le i\le n}|\Psi(T x_{i-1},g(x_{i-1},y_i))-\Psi(T x_{i},g(x_{i-1},y_i))|^p.
\end{align*}
Let $x_0,\dots, x_k\in[-\frac{1}{2},0)$ and $x_{k+1}\notin [-\frac{1}{2},0)$. Since $T|_{[-\frac{1}{2},0)}$ is continuous and increasing, we get
\begin{align*}
\sum_{1\le i\le k}|\Psi(T x_{i-1},g(x_{i-1},y_i))-\Psi(T x_{i},g(x_{i-1},y_i))|^p\le \hat V_p(\Psi)^p.
\end{align*}
A similar estimate holds for  $x_{k+1},\dots, x_n\in(0,\frac{1}{2}]$. Moreover, 
$$|\Psi(T x_{k},g(x_{k},y_{k+1}))-\Psi(T x_{k+1},g(x_{k},y_{k+1}))|^p\le 2^p||\Psi||_\infty^p.$$
Consequently,
\begin{align*}
\hat V_p(\Psi\circ F)^p & \le
 2^{p-1}\left( H_{\alpha,s}(\Psi)^p 
\|g\|_{C^1}^{\alpha p}
+2\hat V_p(\Psi)^p+ 2^p||\Psi||_\infty^p\right)\\
&\le  M^p||\Psi||_{\alpha,s}^p+2^p\hat V_p(\Psi)^p\le (M||\Psi||_{\alpha,s}+2\hat V_p(\Psi))^p.
\end{align*}
Therefore,
$$\hat V_p(\Psi\circ F)\le M||\Psi||_{\alpha,s}+2\hat V_p(\Psi).$$
Using the last inequality repeatedly and the fact that $||\Psi\circ F||_{\alpha,s}\le ||\Psi||_{\alpha,s}$, we obtain the result.
\end{proof}

For $\Psi:\Sigma\to\R$ piecewise $C^\alpha$, we define
\[
\|\Psi\|_\alpha=\|\Psi\|_\infty+H_\alpha(\Psi), \qquad
H_{\alpha}(\Psi)=\sup_{\xi_1\neq\xi_2}\frac{|\Psi(\xi_2)-\Psi(\xi_1)|}{|\xi_2-\xi_1|^\alpha},
\]
where we restrict to the cases $\xi_1,\xi_2\in\Sigma^+$ and 
$\xi_1,\xi_2\in\Sigma^-$ in the supremum.

\begin{corollary} \label{cor:Galatolo}
There exists $C>0$ and $\theta\in(0,1)$ such that 
$$
\begin{aligned}
&\left|\int\Psi\cdot(\Psi\circ F_\eps^n)\, d\mu_{F_\eps}-\Big(\int\Psi\, d\mu_{F_\eps}\Big)^2\right| 
&\le C ||\Psi||^2_\alpha\, \theta^n 
\end{aligned}
$$
for all $n\ge1$, $\eps\ge0$ and all piecewise $C^\alpha$ observables
$\Psi:\Sigma\to\R$.  
\end{corollary}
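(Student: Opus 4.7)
The plan is to deduce Corollary \ref{cor:Galatolo} from Theorem \ref{GalatoloThm} by verifying two things: first, that for $\Psi$ piecewise $C^\alpha$ on $\Sigma^\pm$ with $p\ge 1/\alpha$, all three ingredients of $D_\Psi$ are controlled by $\|\Psi\|_\alpha$; and second, that the growth hypothesis \eqref{GalatoloAss} holds with a constant $K$ independent of $\eps$.

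For the first task, the bound $\|\Psi\|_{\alpha,s}\le\|\Psi\|_\alpha$ is immediate from the definitions since the $\|\cdot\|_{\alpha,s}$ seminorm controls only variation along $y$ with $x$ fixed. For $\hat V_p(\Psi)$, I would fix a partition $-\tfrac12=x_0<\dots<x_n=\tfrac12$ and sample points $y_i$, and let $k$ be the unique index with $x_k<0\le x_{k+1}$. For every $i\ne k+1$ the pair $(x_{i-1},x_i)$ lies in a single component of $\Sigma\setminus\Gamma$, so piecewise H\"older regularity gives $|\Psi(x_{i-1},y_i)-\Psi(x_i,y_i)|^p\le H_\alpha(\Psi)^p|x_{i-1}-x_i|^{\alpha p}$; the single crossing term is bounded crudely by $(2\|\Psi\|_\infty)^p$. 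Since $\alpha p\ge 1$ and each $|x_{i-1}-x_i|\le 1$, one has $|x_{i-1}-x_i|^{\alpha p}\le|x_{i-1}-x_i|$, so the telescoping sum $\sum|x_{i-1}-x_i|$ is bounded by $1$. This yields $\hat V_p(\Psi)\le H_\alpha(\Psi)+2\|\Psi\|_\infty\le 2\|\Psi\|_\alpha$. Combining with Lemma~\ref{Galatolo1} gives $V_{1,1/p}(\Pi\Psi)\le 2^{1/p}\hat V_p(\Psi)\le C\|\Psi\|_\alpha$, and $\|\Pi\Psi\|_1\le\|\Psi\|_\infty$, so $\|\Pi\Psi\|_{1,1/p}\le C\|\Psi\|_\alpha$ and hence $D_\Psi\le C\|\Psi\|_\alpha$.

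For the second task, I would iterate Lemma~\ref{Galatolo2} to get $\hat V_p(\Psi\circ F_\eps^j)\le 2^j\bigl(M\|\Psi\|_{\alpha,s}+\hat V_p(\Psi)\bigr)$, with $M$ depending on $\sup_\eps\|g_\eps\|_{C^1}$, which is finite because $g_\eps$ is a continuous family of piecewise $C^{1+\alpha}$ maps by Lemma~\ref{normaleps}(i). Applying Lemma~\ref{Galatolo1} to $\Psi\circ F_\eps^j$ gives $V_{1,1/p}(\Pi(\Psi\circ F_\eps^j))\le 2^{1/p}\hat V_p(\Psi\circ F_\eps^j)$, and $\|\Pi(\Psi\circ F_\eps^j)\|_1\le\|\Psi\|_\infty\le\|\Psi\|_{\alpha,s}$. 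Adding the two pieces and using $\hat V_p(\Psi)+\|\Psi\|_{\alpha,s}\le D_\Psi$ produces the required estimate $\|\Pi(\Psi\circ F_\eps^j)\|_{1,1/p}+\hat V_p(\Psi\circ F_\eps^j)\le D_\Psi K^j$ with $K=2(M+1)+1$, independent of $\eps$.

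Having verified \eqref{GalatoloAss} uniformly in $\eps$, Theorem~\ref{GalatoloThm} gives the decay bound by $CD_\Psi\|\Psi\|_{\alpha,s}\theta^n$, and substituting $D_\Psi\le C\|\Psi\|_\alpha$ and $\|\Psi\|_{\alpha,s}\le\|\Psi\|_\alpha$ yields the desired $C\|\Psi\|_\alpha^2\theta^n$. The only real subtlety is the exponent comparison $|x_{i-1}-x_i|^{\alpha p}\le|x_{i-1}-x_i|$ used in bounding $\hat V_p(\Psi)$; without the standing choice $p\ge 1/\alpha$ this step fails, and one would be forced to bound the $\alpha p$-sum by the Euclidean diameter instead, which would not telescope.
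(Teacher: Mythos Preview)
Your proposal is correct and follows essentially the same route as the paper: verify hypothesis~\eqref{GalatoloAss} by combining Lemmas~\ref{Galatolo1} and~\ref{Galatolo2}, then invoke Theorem~\ref{GalatoloThm} together with $D_\Psi\le C\|\Psi\|_\alpha$ and $\|\Psi\|_{\alpha,s}\le\|\Psi\|_\alpha$. The paper simply asserts $D_\Psi\le 3\|\Psi\|_\alpha$ without justification, whereas you supply the missing argument that $\hat V_p(\Psi)\le C\|\Psi\|_\alpha$ via the telescoping bound using $\alpha p\ge 1$; this is a welcome addition. One minor quibble: your explicit choice $K=2(M+1)+1$ does not quite survive the case $j=1$ once the factor $2^{1/p}+1$ (which can be as large as~$3$) is tracked through, but of course some $\eps$-independent $K$ works and that is all that is needed.
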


\begin{proof}
By Lemma~\ref{Galatolo1},
\[
||\Pi(\Psi\circ F_\eps^j)||_{1,1/p}+\hat V_p(\Psi\circ F_\eps^j)
\le \|\Psi\|_\infty+(2^{1/p}+1)\hat V_p(\Psi\circ F_\eps^j).
\]
By Lemma~\ref{Galatolo2}, there is a constant $K_0>1$ such that
\[
||\Pi(\Psi\circ F_\eps^j)||_{1,1/p}+\hat V_p(\Psi\circ F_\eps^j)
\le K_02^j(\|\Psi\|_{\alpha,s}+\hat V_p(\Psi)).
\]
Hence assumption~\eqref{GalatoloAss} holds with   $K=2K_0$.
The result follows from Theorem~\ref{GalatoloThm} since
$D_\Psi\le 3\|\Psi\|_\alpha$ and
$\|\Psi\|_{\alpha,s}\le \|\Psi\|_\alpha$.
\end{proof}

\subsection{Variance continuity for the family of two dimensional maps}
We continue to fix $\alpha\in(0,1]$.

\begin{theorem}\label{main2}
Let $\Psi_\eps$, $\eps\ge 0$,  be piecewise $C^\alpha$ with $\sup_\eps||\Psi_\eps||_{\alpha}<\infty$. Assume that 
\begin{equation} \label{emain2}
\lim_{\eps\to 0}\int |\Psi_\eps-\Psi_0|\,d\mu_{F_\eps}=0.
\end{equation}
Then $\lim_{\eps\to 0}\sigma_{F_\eps}^2(\Psi_\eps)=\sigma_{F_0}^2(\Psi_0).$
\end{theorem}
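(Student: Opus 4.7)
My strategy is the Green--Kubo formula~\eqref{Kubo} combined with the uniform exponential decay of correlations from the previous subsection. Write
\[
\sigma^2_{F_\eps}(\Psi_\eps) = \int \hat\Psi_\eps^2\,d\mu_{F_\eps} + 2\sum_{n=1}^\infty c_n(\eps),\qquad c_n(\eps):=\int \hat\Psi_\eps\cdot(\hat\Psi_\eps\circ F_\eps^n)\,d\mu_{F_\eps}.
\]
Corollary~\ref{cor:Galatolo} gives the uniform estimate $|c_n(\eps)|\le CM^2\theta^n$ with $M=\sup_\eps\|\Psi_\eps\|_\alpha<\infty$, so given $\delta>0$ one can choose $N$ (independent of $\eps$) with $|\sum_{n>N}c_n(\eps)|<\delta/2$. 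The problem then reduces to showing $c_n(\eps)\to c_n(0)$ for each fixed $n\ge0$; the lead variance term $\int\hat\Psi_\eps^2\,d\mu_{F_\eps}$ is the $n=0$ analogue and is handled by the same arguments. The means $\int\Psi_\eps\,d\mu_{F_\eps}$ converge to $\int\Psi_0\,d\mu_{F_0}$ by combining~\eqref{emain2} with Proposition~\ref{ss} (at $n=0$), so I may work throughout with the uncentred integrals.

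For fixed $n\ge1$ I decompose
\[
\int\Psi_\eps(\Psi_\eps\circ F_\eps^n)\,d\mu_{F_\eps}-\int\Psi_0(\Psi_0\circ F_0^n)\,d\mu_{F_0}=A_\eps+B_\eps+C_\eps,
\]
where $A_\eps$ swaps the observable ($\Psi_\eps\mapsto\Psi_0$), $B_\eps$ swaps the dynamics ($F_\eps^n\mapsto F_0^n$), and $C_\eps$ swaps the measure ($\mu_{F_\eps}\mapsto\mu_{F_0}$). Using $F_\eps$-invariance of $\mu_{F_\eps}$,
\[
|A_\eps|\le\bigl(\|\Psi_\eps\|_\infty+\|\Psi_0\|_\infty\bigr)\int|\Psi_\eps-\Psi_0|\,d\mu_{F_\eps}\to 0
\]
by~\eqref{emain2}, and $C_\eps\to0$ is exactly Proposition~\ref{ss} applied to the piecewise H\"older (hence piecewise continuous) observable $\Psi_0$.

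The main obstacle is $B_\eps=\int\Psi_0\,[(\Psi_0\circ F_\eps^n)-(\Psi_0\circ F_0^n)]\,d\mu_{F_\eps}$, which compares iterates of different discontinuous maps composed with a discontinuous observable. Here I exploit the normalised coordinates of Section~\ref{sec:normal}: since $F_\eps(x,y)=(T_\eps x,g_\eps(x,y))$ by Lemma~\ref{normaleps}(i), the discontinuity set of $F_\eps^n$ is the finite union of vertical segments $\bigcup_{j=0}^{n-1}T_\eps^{-j}\{0\}\times I$, and the same description holds for $\eps=0$. Given $\eta>0$, pick $\delta>0$ so small that the $\delta$-neighbourhood $D_n^\delta$ of $\bigcup_{j=0}^{n}T_0^{-j}\{0\}\times I$ satisfies $\mu_{F_\eps}(D_n^\delta)<\eta$ uniformly in $\eps$ small; this uses that $\mu_{F_\eps}$ pushes forward under the stable quotient to $\bar\mu_\eps$, together with the uniform $L^\infty$ bound on the densities $h_\eps$ from Proposition~\ref{recap}(a). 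For $\eps$ small the $\eps$-discontinuity set is also contained in $D_n^\delta$, since $T_\eps^{-j}\{0\}\to T_0^{-j}\{0\}$. On the complement $\Sigma\setminus D_n^\delta$ both $F_\eps^n$ and $F_0^n$ are continuous, the former converges uniformly to the latter (by iterating the piecewise $C^{1+\alpha}$ continuous convergence from Theorem~\ref{Teps} and Lemma~\ref{normaleps}(i)), and $F_0^n(\Sigma\setminus D_n^\delta)$ remains at distance $\ge\delta$ from $\Gamma$; hence the piecewise H\"older estimate
\[
|\Psi_0\circ F_\eps^n-\Psi_0\circ F_0^n|\le H_\alpha(\Psi_0)\,|F_\eps^n-F_0^n|^\alpha
\]
tends uniformly to $0$ on $\Sigma\setminus D_n^\delta$. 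Splitting the integral accordingly yields $\limsup_{\eps\to0}|B_\eps|\le 2\|\Psi_0\|_\infty^2\eta$; letting $\eta\to0$ completes the term-by-term convergence and proves the theorem.
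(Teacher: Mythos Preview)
Your proof is correct and follows the same Green--Kubo strategy as the paper: invoke Corollary~\ref{cor:Galatolo} for uniform summability of the series, then establish term-by-term convergence by decomposing each correlation difference into an observable swap, a dynamics swap, and a measure swap, handling the first via~\eqref{emain2} and $F_\eps$-invariance and the last via Proposition~\ref{ss}.

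The one substantive difference is in the dynamics swap (your $B_\eps$, the paper's term~(III)). The paper telescopes
\[
\Psi_0\circ F_\eps^n-\Psi_0\circ F_0^n=\sum_{i=1}^n\bigl(\Psi_0\circ F_0^{i-1}\circ F_\eps^{n-i+1}-\Psi_0\circ F_0^{i}\circ F_\eps^{n-i}\bigr),
\]
uses $F_\eps$-invariance of $\mu_{F_\eps}$ to reduce each summand to a one-step comparison, and isolates this as Lemma~\ref{Galatolo1.5} (which needs only piecewise continuity of $\Psi$). You instead argue directly that $F_\eps^n\to F_0^n$ uniformly off a union of thin vertical strips and apply the piecewise H\"older bound for $\Psi_0$. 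Both arguments rest on the same ``good set'' mechanism and the uniform density bound from Proposition~\ref{recap}(a); the paper's telescoping buys a cleaner modular lemma requiring weaker hypotheses, while your direct route avoids the intermediate lemma at the cost of iterating the convergence $F_\eps\to F_0$. One minor imprecision: the distance of $F_0^n(\Sigma\setminus D_n^\delta)$ from $\Gamma$ is not literally $\ge\delta$ but rather bounded below by a positive constant depending on $n$ and $\delta$ (via the expansion estimate for $T_0^n$); this does not affect the argument.
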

\noindent (The hypotheses of this result will be verified in section~\ref{varianceflow}.
In particular, condition~\eqref{emain2} is addressed in Lemma~\ref{Modified_New_Marks}.)

We first prove a lemma that will be used in the proof of Theorem \ref{main2}.

\begin{lemma}\label{Galatolo1.5}
Let $\Psi:\Sigma\to \R$ be piecewise continuous and fix $n\ge0$.
Then
$$\lim_{\eps\to 0}\int |\Psi\circ F_0^n-\Psi\circ F_0^{n-1}\circ F_\eps|\,d\mu_{F_\eps}=0.$$
\end{lemma}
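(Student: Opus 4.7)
The plan is to set $g := \Psi\circ F_0^{n-1}$ (the case $n=0$ being vacuous), so the integrand reads $g\circ F_0 - g\circ F_\eps$. Since $\Psi$ is piecewise continuous with discontinuity only on $\Gamma$, $g$ is continuous on $\Sigma\setminus D$ where $D:=\bigcup_{j=0}^{n-1}F_0^{-j}\Gamma$ is a finite union of stable leaves; likewise $g\circ F_0=\Psi\circ F_0^n$ is continuous on $\Sigma\setminus D'$ with $D':=D\cup F_0^{-1}D=\bigcup_{j=0}^nF_0^{-j}\Gamma$. Both $D$ and $D'$ are finite unions of smooth curves of zero $\mu_{F_\eps}$-mass for every $\eps\ge 0$, and $|g|\le \|\Psi\|_\infty$.

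The strategy is the standard split into a small exceptional neighborhood and a uniformly good region. Fix $\delta>0$. Since $\mu_{F_0}(D')=0$, pick $r>0$ and a continuous cut-off $\chi:\Sigma\to[0,1]$ equal to $1$ on the $r$-neighborhood $N_r$ of $D'$ and supported in its $2r$-neighborhood, with $\int\chi\,d\mu_{F_0}<\delta$. By weak convergence $\mu_{F_\eps}\to\mu_{F_0}$ on continuous observables (recalled at the start of this section), one has $\mu_{F_\eps}(N_r)\le\int\chi\,d\mu_{F_\eps}<2\delta$ for all sufficiently small $\eps$. Together with $|g|\le\|\Psi\|_\infty$, this bounds the part of the integral coming from $N_r$ by $4\|\Psi\|_\infty\delta$, uniformly in small $\eps$.

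On the complement $A:=\Sigma\setminus N_r$ I would show that $|g\circ F_\eps-g\circ F_0|$ is uniformly small. Because $D$ and $D'$ consist of vertical (stable) lines and $T_0$ is uniformly expanding with some constant $c>1$ (Proposition \ref{T}), the inclusion $F_0^{-1}D\subseteq D'$ forces $d(F_0\xi,D)\ge cr$ for every $\xi\in A$. By continuity of the family in the piecewise $C^{1+\alpha}$ topology (Lemma \ref{normaleps}(i)), $F_\eps\to F_0$ uniformly on $A\subset\Sigma\setminus\Gamma$, so also $d(F_\eps\xi,D)\ge cr/2$ for all small $\eps$. On the compact set $\{x\in\Sigma:d(x,D)\ge cr/2\}$ the function $g$ is uniformly continuous with some modulus $\omega$, and
\[
\sup_{\xi\in A}|g(F_\eps\xi)-g(F_0\xi)|\le \omega\bigl(\sup_{\xi\in A}|F_\eps\xi-F_0\xi|\bigr)\longrightarrow 0 \quad (\eps\to 0).
\]
Combining both pieces yields $\limsup_{\eps\to0}\int|g\circ F_\eps-g\circ F_0|\,d\mu_{F_\eps}\le 4\|\Psi\|_\infty\delta$, and letting $\delta\to 0$ finishes the proof. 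The only delicate point is the geometric step transporting a distance bound from $D'$ to one from $D$: it uses crucially that $D$ consists of vertical leaves (so Euclidean distance to $D$ reduces to horizontal distance) together with the uniform horizontal expansion of $T_0$.
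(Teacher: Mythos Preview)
Your argument follows the same strategy as the paper's: excise a small-measure neighbourhood of the singular set, use statistical stability to bound that piece uniformly in $\eps$, and on the complement combine uniform continuity of $\Psi\circ F_0^{n-1}$ with uniform convergence $F_\eps\to F_0$. The one substantive difference is the choice of bad set. The paper uses $E_{\delta,n}=\bigcup_{i=0}^n F_0^{-i}(B_\delta)$, i.e.\ preimages of a $\delta$-neighbourhood of $\Gamma$, which makes the transport step trivial: $\xi\in E_{\delta,n}^c$ forces $F_0\xi\in E_{\delta,n-1}^c$ by definition, and then uniform convergence of $F_\eps$ gives $F_\eps\xi\in E_{\delta/2,n-1}^c$. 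Your $N_r$ is instead an $r$-neighbourhood of the union of preimages, and then the transport step needs the quantitative claim $d(F_0\xi,D)\ge cr$. As written this leans on a pointwise bound $T_0'\ge c>1$, which Proposition~\ref{T} does not quite give (it gives $(T_0^n)'\ge c\theta^n$, hence only $T_0'\ge c\theta$, possibly $\le 1$), and you also need to dispose of points of $D$ whose $x$-coordinate lies outside the range of the relevant branch of $T_0$. Both issues evaporate if you drop the expansion argument and observe instead that $A$ is compact, $F_0$ is continuous on $A$ (since $\Gamma\subset D'$), and $F_0(A)\cap D=\emptyset$, so $d(F_0(A),D)=:\rho>0$ by compactness; then $d(F_\eps\xi,D)\ge\rho/2$ for small $\eps$. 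With that patch your proof is correct and matches the paper's, the paper's choice of bad set simply being a little slicker.
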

\begin{proof}
Let $\delta>0$, $j\ge0$, and define $E_{\delta,j}=\bigcup_{i=0}^j F_0^{-i}(B_\delta)$ 
where $B_\delta$ is the $\delta$-neighbourhood of $\Gamma$.
Then $\mu_{F_0}(E_{\delta,n})\le (n+1)\mu_{F_0}(B_\delta)\le M\delta$ where
$M=2(n+1)\|h_0\|_\infty$.

The closure of $E_{\delta,n}$ lies in the interior of $E_{2\delta,n}$,
so there exists a continuous function $\chi:\Sigma\to[0,1]$ supported in $E_{2\delta,n}$ and equal to $1$ on $E_{\delta,n}$.
By statistical stability, 
for $\eps$ sufficiently small,
$\mu_{F_\eps}(E_{\delta,n})
\le \int\chi d\mu_{F_\eps}\le 2\int\chi d\mu_{F_0}\le
2\mu_{F_0}(E_{2\delta,n})\le 4M\delta$.

Let $E_{\delta,j}^c=\Sigma\setminus E_{\delta,j}$.
Now $F_0(\xi)\in E_{\delta,n-1}^c$ for $\xi\in E_{\delta,n}^c$.  Also, $F_\eps\to F_0$ uniformly on $E_{\delta,n}^c$ as $\eps\to0$, so  $F_0(\xi),\,F_\eps(\xi)\in E_{\delta/2,n-1}^c$ for all $\xi\in E_{\delta,n}^c$ and all sufficiently small $\eps$.
Moreover, $\Psi\circ F_0^{n-1}$ is uniformly continuous on $E_{\delta/2,n-1}^c$.
It follows from this and the uniform convergence of $F_\eps$ on $E_{\delta,n}^c$ that $S_\eps=\sup_{E_{\delta,n}^c}|\Psi\circ F_0^{n-1}\circ F_\eps-\Psi\circ F_0^n|\to0$ as $\eps\to0$.

Hence
\begin{align*}
\int &  |\Psi\circ F_0^n-\Psi\circ F_0^{n-1}\circ F_\eps|\,d\mu_{F_\eps}
 \le\int_{E_{\delta,n}} |\Psi\circ F_0^n-\Psi\circ F_0^{n-1}\circ F_\eps|\,d\mu_{F_\eps}\\ & \qquad\qquad\qquad\qquad\qquad\qquad\qquad +\int_{E_{\delta,n}^c} |\Psi\circ F_0^n-\Psi\circ F_0^{n-1}\circ F_\eps|\,d\mu_{F_\eps}\\
&\le2||\Psi||_\infty\,\mu_{F_\eps}(E_{\delta,n})+S_\eps 
\le 8||\Psi||_\infty M\delta+S_\eps
\to 8||\Psi||_\infty M\delta.
\end{align*}
The result follows since $\delta>0$ is arbitrary.
\end{proof}

\begin{proof}[Proof of Theorem \ref{main2}]
We use the Green-Kubo formula \eqref{Kubo}. 
Recall that $\hat\Psi_\eps=\Psi_\eps-\int\Psi_\eps\,d\mu_{F_\eps}$.
By Corollary~\ref{cor:Galatolo}, the series in~\eqref{Kubo} is absolutely convergent uniformly in $\eps$.
Therefore, it suffices to show that 
$\int \hat\Psi_\eps \cdot (\hat\Psi_\eps\circ F^n_{\eps})\, d\mu_{F_\eps}\to
\int\hat\Psi_0 \cdot (\hat\Psi_0\circ F_0^n)\, d\mu_{F_0}$ as $\eps\to0$ for each fixed $n\ge0$.  Now
\begin{equation*}
\begin{split}
&\int\hat\Psi_\eps \cdot (\hat\Psi_\eps\circ F^n_{\eps})\, d\mu_{F_\eps}-\int\hat\Psi_0 \cdot (\hat\Psi_0\circ F_0^n)\, d\mu_{F_0}\\
&=\int\hat\Psi_\eps \cdot (\hat\Psi_\eps\circ F^n_{\eps})\, d\mu_{F_\eps}-\int\hat\Psi_0 \cdot (\hat\Psi_\eps\circ F^n_\eps)\, d\mu_{F_\eps}\\
&\qquad +\int\hat\Psi_0 \cdot (\hat\Psi_\eps\circ F^n_\eps)\, d\mu_{F_\eps}-\int\hat\Psi_0 \cdot (\hat\Psi_0\circ F^n_\eps)\, d\mu_{F_\eps}\\
&\qquad +\int\hat\Psi_0 \cdot (\hat\Psi_0\circ F^n_\eps)\, d\mu_{F_\eps}-\int\hat\Psi_0 \cdot (\hat\Psi_0\circ F_0^n)\, d\mu_{F_\eps}\\
&\qquad +\int\hat\Psi_0 \cdot (\hat\Psi_0\circ F_0^n)\, d\mu_{F_\eps}-\int\hat\Psi_0 \cdot (\hat\Psi_0\circ F_0^n)\, d\mu_{F_0}\\
&=(I) + (II) + (III) + (IV).
\end{split}
\end{equation*}
We have $|(II)|\le 
\|\hat\Psi_0\|_\infty\int|\hat\Psi_\eps-\hat\Psi_0|\circ F_\eps^n\,d\mu_{F_\eps}=
\|\hat\Psi_0\|_\infty\int|\hat\Psi_\eps-\hat\Psi_0|\,d\mu_{F_\eps}\to0$ by~\eqref{emain2}, and similarly $(I)\to0$.  Also, $(IV)\to0$ by
Proposition~\ref{ss}. 

Finally, $|(III)|\le \|\hat\Psi_0\|_\infty\int|\hat\Psi_0\circ F_0^n-\hat\Psi_0\circ F_\eps^n|\,d\mu_{F_\eps}$.
Note that
\begin{equation*}
\begin{split}
\int|\hat\Psi_0\circ F_0^n-\hat\Psi_0\circ F_\eps^n|\,d\mu_{F_\eps}&
\le \sum_{i=1}^{n}\int |\hat\Psi_0\circ F_0^i\circ F_\eps^{n-i}-\hat\Psi_0\circ F_0^{i-1}\circ F_\eps^{n-i+1}|\,d\mu_{F_\eps}\\
& = \sum_{i=1}^{n}\int |\hat\Psi_0\circ F_0^i-\hat\Psi_0\circ F_0^{i-1}\circ F_\eps |\,d\mu_{F_\eps}.
\end{split}
\end{equation*}
Thus, by Lemma \ref{Galatolo1.5}, $(III)\to 0$ as $\eps\to 0$.
\end{proof}

We end this section with the following result which gives explicit estimates in terms of $\Psi$ as required for the proof of Theorem~\ref{main3}(b).

\begin{proposition}\label{p:main2}
Let $\|\Psi\|_{1,\eps}= \int |\Psi|\,d\mu_{F_\eps}$.
For all $H>0$ and $\alpha>0$,
there exists  $C>0$ such that 
\[
|\sigma_{F_\eps}^2(\Psi)-\sigma_{F_\eps}^2(\Psi')|\le 
C \|\Psi-\Psi'\|_{1,\eps}(1+ |\log\|\Psi-\Psi'\|_{1,\eps}|)
\]
for all $\Psi$, $\Psi'$ piecewise H\"older with
$\|\Psi\|_\alpha,\,\|\Psi'\|_\alpha\le H$ and all $\eps\ge0$.
\end{proposition}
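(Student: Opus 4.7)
The plan is to imitate the proof of Theorem~\ref{main2} via the Green--Kubo formula \eqref{Kubo}, but now keeping explicit track of constants and truncating the tail of the series at an optimised cut-off $N$. Write $\Phi_\eps = \hat\Psi_\eps = \Psi - \int\Psi\,d\mu_{F_\eps}$ and $\Phi'_\eps = \hat\Psi'_\eps = \Psi' - \int\Psi'\,d\mu_{F_\eps}$, and set $\delta = \|\Psi-\Psi'\|_{1,\eps}$. Since $\|\Psi\|_\alpha,\|\Psi'\|_\alpha\le H$, we have the uniform bounds $\|\Phi_\eps\|_\infty,\|\Phi'_\eps\|_\infty\le 2H$ and also $\|\Phi_\eps\|_\alpha,\|\Phi'_\eps\|_\alpha \le 2H$; moreover $\|\Phi_\eps - \Phi'_\eps\|_{1,\eps}\le 2\delta$ by the triangle inequality.

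The variance difference splits into the $n=0$ term and the correlations:
\[
\sigma^2_{F_\eps}(\Psi) - \sigma^2_{F_\eps}(\Psi') = \int(\Phi_\eps^2 - \Phi_\eps'^{\,2})\,d\mu_{F_\eps} + 2\sum_{n\ge 1}\bigl[C_n(\Phi_\eps) - C_n(\Phi'_\eps)\bigr],
\]
where $C_n(\Phi)=\int\Phi\cdot(\Phi\circ F_\eps^n)\,d\mu_{F_\eps}$. For each $n\ge 0$, the telescoping identity
\[
C_n(\Phi_\eps) - C_n(\Phi'_\eps) = \int(\Phi_\eps-\Phi'_\eps)\cdot(\Phi_\eps\circ F_\eps^n)\,d\mu_{F_\eps} + \int\Phi'_\eps\cdot\bigl((\Phi_\eps-\Phi'_\eps)\circ F_\eps^n\bigr)\,d\mu_{F_\eps}
\]
combined with $F_\eps$-invariance of $\mu_{F_\eps}$ gives the uniform bound $|C_n(\Phi_\eps)-C_n(\Phi'_\eps)|\le 8H\delta$. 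Summing these short-range estimates over $1\le n\le N$ contributes at most $16HN\delta$. The $n=0$ term is similarly bounded by $8H\delta$.

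For the tail $n > N$, Corollary~\ref{cor:Galatolo} applied to $\Phi_\eps$ and to $\Phi'_\eps$ yields $|C_n(\Phi_\eps)|,|C_n(\Phi'_\eps)|\le C\|\Phi_\eps\|_\alpha^2\theta^n\le 4CH^2\theta^n$, so the tail is controlled by $C' H^2\theta^{N}/(1-\theta)$. Combining,
\[
|\sigma^2_{F_\eps}(\Psi)-\sigma^2_{F_\eps}(\Psi')|\le C_1 H N\delta + C_2 H^2\theta^N.
\]
Now I choose $N = \lceil |\log\delta|/|\log\theta|\rceil$ when $\delta \le 1$ (so that $\theta^N\le \delta$), and $N=1$ otherwise. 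This gives the bound
\[
|\sigma^2_{F_\eps}(\Psi)-\sigma^2_{F_\eps}(\Psi')|\le C \delta\bigl(1+|\log\delta|\bigr)
\]
with $C$ depending only on $H$ (via $C_1,C_2$) and on the uniform constants $\theta,C$ of Corollary~\ref{cor:Galatolo}, which are independent of $\eps$.

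There is no serious obstacle: the two ingredients (a trivial $L^1$-type bound on each individual correlation, and the uniform-in-$\eps$ exponential decay of correlations) are already in hand, and the logarithmic factor is the standard artefact of balancing an arithmetic rate against a geometric rate in the cut-off. The only point requiring a modicum of care is verifying that the decay estimate of Corollary~\ref{cor:Galatolo} applies to the recentred observables $\Phi_\eps,\Phi'_\eps$, which is immediate since subtracting a constant only doubles the Hölder norm and preserves the piecewise structure.
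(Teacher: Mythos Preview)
Your proof is correct and follows essentially the same approach as the paper: split the Green--Kubo series at a cut-off $N$, bound the first $N$ terms by the trivial $L^1$ estimate $O(H\delta)$ via telescoping and $F_\eps$-invariance of $\mu_{F_\eps}$, bound the tail by $O(H^2\theta^N)$ via Corollary~\ref{cor:Galatolo}, and then optimise in $N$. The paper's choice $N=[q|\log\delta|]$ and yours $N=\lceil|\log\delta|/|\log\theta|\rceil$ are equivalent up to constants.
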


\begin{proof}
Let $\hat\Psi=\Psi-\int\Psi\,d\mu_{F_\eps}$ and 
$\hat\Psi'=\Psi'-\int\Psi'\,d\mu_{F_\eps}$.
Let $N\ge1$.
It follows from the Green-Kubo formula~\eqref{Kubo} that
\begin{equation*}
\begin{aligned}
& |\sigma_{F_\eps}^2(\Psi)-\sigma_{F_\eps}^2(\Psi')|\le 
2\sum_{n=0}^{N-1}\|\hat\Psi(\hat\Psi\circ F_\eps^n)
-\hat\Psi'(\hat\Psi'\circ F_\eps^n)\|_{1,\eps}
\\ & \qquad +2\sum_{n=N}^{\infty}\|\hat\Psi(\hat\Psi\circ F_\eps^n)\|_{1,\eps}
+2\sum_{n=N}^{\infty}\|\hat\Psi'(\hat\Psi'\circ F_\eps^n)\|_{1,\eps}.
\end{aligned}
\end{equation*}
Now
\begin{equation*}
\begin{aligned}
 & \|\hat\Psi(\hat\Psi\circ F_\eps^n) -\hat\Psi'(\hat\Psi'\circ F_\eps^n)\|_{1,\eps}
 \le \|\hat\Psi-\hat\Psi'\|_{1,\eps}\|\hat\Psi\circ F_\eps^n\|_\infty
\\ & \qquad\qquad\qquad\qquad\qquad\qquad\qquad\qquad +\|\hat\Psi'\|_\infty\|\hat\Psi\circ F_\eps^n-\hat\Psi'\circ F_\eps^n\|_{1,\eps}
\\ & \qquad\qquad = (\|\hat\Psi\|_\infty+\|\hat\Psi'\|_\infty)\|\hat\Psi-\hat\Psi'\|_{1,\eps}
\le 
 4(\|\Psi\|_\infty+\|\Psi'\|_\infty)\|\Psi-\Psi'\|_{1,\eps}.
\end{aligned}
\end{equation*}
Also, by Corollary~\ref{cor:Galatolo},
$\|\hat\Psi(\hat\Psi\circ F_\eps^n)\|_{1,\eps}\le 
C\theta^n\|\hat\Psi\|_\alpha^2
\le 4C\theta^n\|\Psi\|_\alpha^2$ and similarly for $\Psi'$.
Hence
\[
|\sigma_{F_\eps}^2(\Psi)-\sigma_{F_\eps}^2(\Psi')|\le 
8N(\|\Psi\|_\infty+\|\Psi'\|_\infty)\|\Psi-\Psi'\|_{1,\eps}
+C'\theta^N(\|\Psi\|_\alpha^2+\|\Psi'\|_\alpha^2).
\]
where $C'=8C(1-\theta)^{-1}$.
Taking $N=[q|\log\|\Psi-\Psi'\|_{1,\eps}|]$ for $q$ sufficiently large yields the desired result.
\end{proof}

\section{Variance continuity for the flows}\label{varianceflow}

By \cite{AM19,HM}, the CLT holds for H\"older observables for the Lorenz flows $X_\eps$.
In this section, we show how to obtain continuity of the variances, proving Theorem~\ref{main3}.
During most of this section we continue to work with normalised families as in section~\ref{sec:normal}, culminating in Corollary~\ref{maincor} which is an analogue of Theorem~\ref{main3} for normalised families.  We conclude by using
Corollary~\ref{maincor} to prove Theorem~\ref{main3}.
Throughout we fix $\beta\in(0,1)$.

First, we recall some results from the literature.
Let $\psi:\R^3\to\R$ be $C^\beta$.
Define 
\[
\tilde\psi(x)=\psi(x)-\psi(0).
\] 
Also, define the induced observables $\Psi_{\eps}:\Sigma\setminus\Gamma\to \R$, $\eps\ge0$, by 
\begin{equation}\label{inducedobs}
 \Psi_{\eps}(\xi)=\int_{0}^{\tau_{\eps}(\xi)}\tilde\psi(X_\eps(\xi, t))\,dt.
\end{equation}
The left-hand side of~\eqref{eq:CLT} is identical for 
$\psi$ and $\tilde\psi$, so
\begin{equation} \label{eq:MT1}
\sigma^2_{X_\eps}(\psi)=\sigma^2_{X_\eps}(\tilde\psi).
\end{equation}

\begin{proposition} \label{MelbTor}
The variances for $\tilde\psi$ and $\Psi_{\eps}$ are related by
\[
\sigma^2_{X_\eps}(\tilde\psi)=\frac{\sigma_{F_\eps}^2( \Psi_{\eps})}{\int \tau_\eps\, d\mu_{F_\eps}}.
\]
\end{proposition}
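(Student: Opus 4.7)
The plan is to apply the standard suspension-flow transfer argument (as in Melbourne-T\"or\"ok, used in~\cite{HM,MN}) to reduce the flow CLT to the already-established CLT for $F_\eps$ applied to the induced observable $\Psi_\eps$. The fundamental identity is
\[
\int_0^{T_n(\xi)}\tilde\psi(X_\eps(\xi,s))\,ds = \sum_{i=0}^{n-1}\Psi_\eps(F_\eps^i\xi),
\]
where $T_n(\xi)=\sum_{i=0}^{n-1}\tau_\eps(F_\eps^i\xi)$ is the $n$-th return time. This reduces the Birkhoff integral of $\tilde\psi$ along a flow orbit through $\Sigma$ to a discrete Birkhoff sum for $F_\eps$.

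First I would verify integrability. By Proposition~\ref{tau}, $\tau_\eps$ has only a logarithmic singularity at $\Gamma$, and Proposition~\ref{recap}(a) gives uniform boundedness of the one-dimensional density $h_\eps$; combined with Lemma~\ref{normaleps}(ii), this yields $\tau_\eps\in L^q(\mu_{F_\eps})$ for every $q<\infty$, uniformly in $\eps$, so $\bar\tau_\eps := \int\tau_\eps\,d\mu_{F_\eps}\in(0,\infty)$. For $\Psi_\eps$, the whole purpose of subtracting $\psi(0)$ is to make the induced integral well-controlled: since $\tilde\psi$ vanishes at the singularity and is H\"older, $|\tilde\psi(y)|\le H_\beta(\tilde\psi)|y|^\beta$, and plugging in the linearised dynamics near $0$ gives a uniform bound $\|\Psi_\eps\|_\infty<\infty$.

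To transfer the CLT, given $t>0$, let $n(t,\xi)=\max\{k:T_k(\xi)\le t\}$ and write $t = T_n+r$ with $0\le r<\tau_\eps(F_\eps^n\xi)$. By Birkhoff's theorem applied to $\tau_\eps$ under $\mu_{F_\eps}$, $n(t)/t\to 1/\bar\tau_\eps$ almost surely. The continuous-to-discrete remainder $\int_{T_n}^t\tilde\psi\circ X_\eps\,ds$ is bounded by $\|\tilde\psi\|_\infty\,\tau_\eps(F_\eps^n\xi)=o(\sqrt t)$ almost surely via Borel-Cantelli using $\tau_\eps\in L^2$. Kac's formula gives $\int\Psi_\eps\,d\mu_{F_\eps}=\bar\tau_\eps\int\tilde\psi\,d\tilde\mu_\eps$, so the centering $t\int\tilde\psi\,d\tilde\mu_\eps$ matches $n\int\Psi_\eps\,d\mu_{F_\eps}$ up to a term proportional to $T_n-n\bar\tau_\eps$. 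Normalising by $\sqrt t$ and using $\sqrt n/\sqrt t\to 1/\sqrt{\bar\tau_\eps}$, the CLT for $F_\eps$ applied to $\Psi_\eps$ yields the Gaussian limit for the flow with variance $\sigma^2_{F_\eps}(\Psi_\eps)/\bar\tau_\eps$.

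The main obstacle is precisely the unbounded return time: one must show that both the end-segment remainder $\int_{T_n}^t\tilde\psi\circ X_\eps\,ds$ and the CLT-scale fluctuation $T_n-n\bar\tau_\eps$ (together with their effect on the centering) are negligible at the $\sqrt t$ scale. This is controlled by the higher moment bounds on $\tau_\eps$, which are available because the logarithmic singularity is benign against the bounded density $h_\eps$; this is exactly the content of the suspension-flow CLT transfer, so in practice I would cite~\cite{HM,MN} for the transfer rather than redo the moment estimates from scratch.
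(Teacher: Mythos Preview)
Your approach is the same as the paper's: the proof there simply records that \cite{HM} establishes the CLT for $\Psi_\eps$ and $\tau_\eps$, and then cites \cite[Theorem~1.1]{MelbTor} for both the passage to the flow CLT and the explicit variance identity. The correct reference for the suspension-flow variance formula is \cite{MelbTor}; \cite{MN} concerns the ASIP for the base map, not the transfer to the flow.

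One point in your heuristic sketch is not right. The fluctuation $(T_n-n\bar\tau_\eps)/\sqrt t$ is \emph{not} $o_P(1)$ when $c:=\int\tilde\psi\,d\mu_\eps\neq0$: by the CLT for $\tau_\eps$ it is of exact order one in distribution, and no moment bound on $\tau_\eps$ will make it smaller. In the Melbourne--T\"or\"ok argument one does not try to discard this term; instead one writes $tc=T_{n(t)}c+O(\tau_\eps\circ F_\eps^{n(t)})$ and absorbs $T_{n(t)}c=\sum_{i<n(t)}c\,\tau_\eps\circ F_\eps^i$ pathwise into the Birkhoff sum, so that the random-time CLT is applied directly to the mean-zero observable $\Psi_\eps-c\tau_\eps$. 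Since you ultimately defer to the cited transfer theorem this does not break your proposal, but the heuristic as written misidentifies what needs to be controlled.
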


\begin{proof}
By~\cite{HM}, $\tilde\psi$ satisfies the CLT.
The method of proof in~\cite{HM} is to show that 
$\Psi_{\eps}$ and $\tau_\eps$ satisfy the CLT, whereupon the CLT for $\tilde\psi$ follows from~\cite[Theorem~1.1]{MelbTor}.  The desired relation between the variances is given explicitly in~\cite[Theorem~1.1]{MelbTor}.
\end{proof}

By~\cite[Lemma~4.1]{AS},
\begin{equation}\label{tauntau}
\lim_{\eps \to 0}\int \tau_\eps\, d\mu_{F_\eps}=\int \tau_0\, d\mu_{F_0}.
\end{equation}
Hence the main quantity to control is $\sigma_{F_\eps}^2( \Psi_{\eps})$.
First, we verify condition~\eqref{emain2} in Theorem~\ref{main2}.

\begin{lemma}\label{Modified_New_Marks}
$\lim_{\eps\to0}\int|\Psi_{\eps}-\Psi_{0}|\,d\mu_{F_\eps}=0$.
\end{lemma}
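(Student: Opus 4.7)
The plan is to prove a uniform $L^\infty$ bound $\sup_\eps\|\Psi_\eps\|_\infty\le M$, and then split $\int_\Sigma|\Psi_\eps-\Psi_0|\,d\mu_{F_\eps}$ into a small neighbourhood of $\Gamma$ (controlled by this uniform bound and the uniformly bounded density of $\bar\mu_\eps$) and its complement (controlled by uniform convergence of the flow and of the return time away from $\Gamma$).

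For the uniform $L^\infty$ bound, the key input is that $\tilde\psi(0)=0$ together with $\beta$-H\"older continuity of $\psi$ gives $|\tilde\psi(z)|\le H_\beta(\psi)|z|^\beta$ for every $z\in\R^3$. For $\xi=(x,y,1)\in\Sigma\setminus\Gamma$, by Proposition~\ref{tau} the return time satisfies $\tau_\eps(\xi)=-\lambda_{1,\eps}^{-1}\log|x|+\tau_{2,\eps}(\xi)$ with $\tau_{2,\eps}$ a uniformly bounded $C^2$ family. For $|x|$ small, the orbit $X_\eps(\xi,\cdot)$ spends most of its flight time in a neighbourhood of $0$ on which the flow linearises, giving a bound of the form $|X_\eps(\xi,t)|\le C\bigl(|x|e^{\lambda_{1,\eps}t}+|y|e^{\lambda_{2,\eps}t}+e^{\lambda_{3,\eps}t}\bigr)$ with $C$ independent of $\eps$. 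Integrating up to the escape time $T\le\lambda_{1,\eps}^{-1}\log|x|^{-1}$ bounds
\[
\int_0^T |X_\eps(\xi,t)|^\beta\,dt\le \frac{1}{\lambda_{1,\eps}\beta}+\frac{|y|^\beta}{|\lambda_{2,\eps}|\beta}+\frac{1}{|\lambda_{3,\eps}|\beta},
\]
while the contribution from the remaining flight through a compact region bounded away from $0$ is at most $\|\tilde\psi\|_\infty\sup_\eps\|\tau_{2,\eps}\|_\infty$. Continuity of $\lambda_{i,\eps}$ and of $\tau_{2,\eps}$ in $\eps$ then yields an $\eps$-uniform bound $|\Psi_\eps(\xi)|\le M$.

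Given the uniform bound, fix $\delta>0$ and set $A_\delta=\{\xi\in\Sigma:|x|<\delta\}$. The pushforward of $\mu_{F_\eps}$ under the stable quotient is $\bar\mu_\eps=h_\eps\,d\Leb$, and $\sup_\eps\|h_\eps\|_\infty<\infty$ by Proposition~\ref{recap}(a), so $\mu_{F_\eps}(A_\delta)\le 2\delta\sup_\eps\|h_\eps\|_\infty$ uniformly in $\eps$; hence
\[
\int_{A_\delta}|\Psi_\eps-\Psi_0|\,d\mu_{F_\eps}\le 4M\delta\sup_\eps\|h_\eps\|_\infty.
\]
On $A_\delta^c=\{|x|\ge\delta\}$ the return times $\tau_\eps$ are uniformly bounded, and as $\eps\to 0$ both $\tau_\eps\to\tau_0$ and $X_\eps(\xi,\cdot)\to X_0(\xi,\cdot)$ uniformly on $A_\delta^c$ up to this common time bound. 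Combined with uniform continuity of $\tilde\psi$ on compacts, this forces $\Psi_\eps\to\Psi_0$ uniformly on $A_\delta^c$, so $\int_{A_\delta^c}|\Psi_\eps-\Psi_0|\,d\mu_{F_\eps}\to 0$. Taking $\limsup_{\eps\to0}$ and then letting $\delta\to 0$ closes the argument.

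The main obstacle is the uniform $L^\infty$ bound. For a single $\eps$ it is immediate from Hartman-Grobman, but to get uniformity one must ensure that the linearising coordinates, or equivalently the exponential expansion and contraction estimates for the flow in a neighbourhood of the singularity, depend on $\eps$ only through quantities varying continuously with $\eps$. This is built into the $C^1$-robustness of singular hyperbolicity underlying the paper's framework; the rest is a routine interplay of statistical stability and the density bound for $\bar\mu_\eps$.
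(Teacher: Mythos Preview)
Your approach is correct but differs from the paper's in how the near-$\Gamma$ contribution is handled. You truncate in space, cutting off a $\delta$-neighbourhood $A_\delta$ of $\Gamma$ and controlling that piece via a uniform bound $\sup_\eps\|\Psi_\eps\|_\infty\le M$, which you derive from $\tilde\psi(0)=0$ and linearisation near the singularity. The paper instead truncates in time: setting $\tau_{\eps,N}=\min\{\tau_\eps,N\}$ and $\Psi_{\eps,N}(\xi)=\int_0^{\tau_{\eps,N}(\xi)}\tilde\psi\circ X_\eps\,dt$, it bounds $|\Psi_\eps-\Psi_{\eps,N}|\le 2\|\psi\|_\infty(\tau_\eps-N)_+$ and integrates this against $\mu_{F_\eps}$ using only that $\log|x|$ is integrable with respect to the bounded density $h_\eps$, obtaining an $e^{-N/C}$ tail without invoking $\tilde\psi(0)=0$ at all; the middle term $\|\Psi_{\eps,N}-\Psi_{0,N}\|_\infty\to0$ is then dealt with exactly as your $A_\delta^c$ part. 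The trade-off is that the paper's proof of this lemma is self-contained, relying only on the logarithmic form of $\tau_\eps$ from Proposition~\ref{tau}, whereas your route front-loads the uniform $L^\infty$ bound on $\Psi_\eps$ --- a fact the paper establishes only afterwards (Corollary~\ref{cor:reg}) via a uniform-in-$\eps$ H\"older linearisation. You are right to flag this as the main obstacle: a bare $C^0$ Hartman--Grobman conjugacy does not by itself yield the pointwise estimate $|X_\eps(\xi,t)|\le C\bigl(|x|e^{\lambda_{1,\eps}t}+|y|e^{\lambda_{2,\eps}t}+e^{\lambda_{3,\eps}t}\bigr)$ with $C$ independent of $\eps$; one needs either the H\"older linearisation of \cite{BarreiraValls07} or a direct hyperbolicity estimate.
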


\begin{proof}
Fix $N\ge1$ and set $\tau_{\eps, N}(\xi)=\min\{\tau_{ \eps}(\xi), N\}$.
Define
$$ \Psi_{\eps, N}(\xi)=\int_{0}^{\tau_{\eps, N}(\xi)}\tilde\psi(X_\eps(\xi, t))\,dt.$$
Then 
$$
|\Psi_0-\Psi_{\eps}|\le |\Psi_0-\Psi_{0,N}|+|\Psi_{0,N}- \Psi_{\eps, N}|+|\Psi_{\eps, N}-\Psi_{\eps}|.
$$

Now,
$$
|\Psi_\eps(\xi)-\Psi_{\eps, N}(\xi)|=\Big|\int_{\tau_{\eps, N}(\xi)}^{\tau_{\eps}(\xi)}\tilde\psi(X_\eps(\xi, t))\,dt\Big| 
\le 2\|\psi\|_{\infty}(\tau_{\eps}(\xi)-\tau_{\eps, N}(\xi)).
$$
Recall that $\xi=(x,y,1)$.
Now $\tau_\eps(\xi) -\tau_{\eps,N}(\xi)\le -C\log|x|-N$, and $\tau_\eps-\tau_{\eps,N}$ is supported on $B_\eps=\{\xi:\tau_\eps(\xi) >N\}\subset \{\xi: |x|\ <e^{-N/C}\}$. Hence, 
 \begin{equation*}
 \begin{aligned}
\int & |\Psi_\eps-\Psi_{\eps, N}|\,d\mu_{F_\eps} 
 \le 2\|h_\eps\|_\infty\|\psi\|_\infty\int_{B_\eps}(-C\log|x|-N)\,dx
\\ & = 4\|h_\eps\|_\infty\|\psi\|_\infty\int_0^{e^{-N/C}}(-C\log x-N)\,dx
 =4C\|h_\eps\|_\infty{||\psi||}_{\infty} e^{-N/C}.
\end{aligned}
\end{equation*}
By Proposition~\ref{recap}(a), there exists $C'>0$ such that
$\int |\Psi_\eps-\Psi_{\eps, N}|\,d\mu_{F_\eps} \le C'e^{-N/C}\|\psi\|_\infty$.
Similarly,
$\int |\Psi_0-\Psi_{0, N}|\,d\mu_{F_\eps} \le C'e^{-N/C}\|\psi\|_\infty$.

Next,
$|\Psi_{\eps,N}(\xi)-\Psi_{0,N}(\xi)| \le (I)+(II)$
where
\begin{equation*}
\begin{aligned}
(I) & =
\int_{\tau_{0,N}(\xi)}^{\tau_{\eps,N}(\xi)}|\tilde\psi(X_\eps(\xi,t))|\,dt
\le 2\|\psi\|_\infty|\tau_{\eps,N}(\xi)-\tau_{0,N}(\xi)|,
\\ (II) & =
\int_0^{\tau_{0,N}(\xi)}|\tilde\psi(X_\eps(\xi,t))-\tilde\psi(X_0(\xi,t))|\,dt
 \\ & 
\!\!\! 
\!\!\! 
\le 
\int_0^N|\psi(X_\eps(\xi,t))-\psi(X_0(\xi,t))|\,dt
 \le 
H_\beta(\psi)\int_0^N|X_\eps(\xi,t)-X_0(\xi,t)|^\beta \,dt.
\end{aligned}
\end{equation*}

For $\delta>0$ fixed sufficiently small, there exists $\eps_0>0$ such that
$\tau_{\eps,N}(\xi)\equiv N$ for $|x|<\delta$ and $\eps<\eps_0$.
Also, it follows from smoothness of the flow and boundedness of first hit times for $|x|\ge\delta$ that $\tau_{\eps,N}\to\tau_{0,N}$ uniformly
on $\{|x|\ge\delta\}$.  Hence
$\lim_{\eps\to0}\|\tau_{\eps,N}-\tau_{0,N}\|_\infty=0$ and so 
$\lim_{\eps\to0}\|(I)\|_\infty=0$.

By continuity of the flow in initial conditions and parameters,
$X_\eps(\xi,t)\to X_0(\xi,t)$ uniformly in $\xi\in\Sigma$ and $t\in[0,N]$.
Hence $\lim_{\eps\to0}\|(II)\|_\infty=0$.

We have shown that $\limsup_{\eps\to0}\int|\Psi_\eps-\Psi_0|\,d\mu_{F_\eps} \le
2C'e^{-N/C}$.
The result follows since $N$ is arbitrary.
\end{proof}

Next, we show that $\sup_\eps\|\Psi_\eps\|_\alpha<\infty$ for some $\alpha>0$.

\begin{lemma} \label{lem:Psi}
Define $\widetilde\Psi_\eps:\Sigma\to\R$ by setting
$$\widetilde\Psi_\eps(x,y)=\int_0^{-\frac{1}{\lambda_{1,\eps}}\log |x|}
\tilde\psi(xe^{\lambda_{1,\eps}t},ye^{\lambda_{2,\eps}t},e^{\lambda_{3,\eps}t})\,dt.$$
Choose $0<\alpha'<-\lambda_{3,\eps}\beta/(\lambda_{1,\eps}-\lambda_{3,\eps})$.
Then $\widetilde\Psi_\eps$ is piecewise $C^{\alpha'}$.
Moreover, there is a constant $C>0$ such that $\|\widetilde\Psi_\eps\|_{\alpha'}\le C\|\psi\|_\beta$ for all $\eps\ge0$.
\end{lemma}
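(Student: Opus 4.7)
The plan is to decompose $\widetilde\Psi_\eps(x,y)=\phi_\eps(x)+R_\eps(x,y)$ by writing the integrand as $\tilde\psi(xe^{\lambda_{1,\eps}t},ye^{\lambda_{2,\eps}t},e^{\lambda_{3,\eps}t})=\tilde\psi(xe^{\lambda_{1,\eps}t},0,0)+[\text{rest}]$. After the substitution $v=xe^{\lambda_{1,\eps}t}$ (so $dv/v=\lambda_{1,\eps}\,dt$), the first piece becomes
\[
\phi_\eps(x)=\frac{1}{\lambda_{1,\eps}}\int_x^1\tilde\psi(v,0,0)\,\frac{dv}{v},
\]
depending only on $x$, while a further substitution $u=(x/v)^{\eta_\eps}$ with $\eta_\eps=-\lambda_{3,\eps}/\lambda_{1,\eps}>0$ and $\nu_\eps=\lambda_{2,\eps}/\lambda_{3,\eps}\ge 1$ recasts the remainder as
\[
R_\eps(x,y)=\frac{1}{\lambda_{1,\eps}\eta_\eps}\int_{x^{\eta_\eps}}^1\bigl[\tilde\psi(xu^{-1/\eta_\eps},yu^{\nu_\eps},u)-\tilde\psi(xu^{-1/\eta_\eps},0,0)\bigr]\,\frac{du}{u}.
\]

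From $\tilde\psi(0)=0$ and the H\"older bound on $\psi$ we get $|\tilde\psi(v,0,0)|\le H_\beta(\psi)v^\beta$, so $\phi_\eps$ is bounded and $C^\beta$ in $x\in(0,\tfrac12]$ with H\"older constant $\le C\|\psi\|_\beta$. The integrand of $R_\eps$ is dominated by $H_\beta(\psi)(|y|u^{\nu_\eps}+u)^\beta\le Cu^\beta$, giving $R_\eps$ uniformly bounded; H\"older continuity in $y$ for fixed $x$ is immediate from the bound $H_\beta(\psi)|y_1-y_2|^\beta u^{\nu_\eps\beta}$ on the integrand difference, yielding $|R_\eps(x,y_1)-R_\eps(x,y_2)|\le C\|\psi\|_\beta|y_1-y_2|^\beta$.

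The heart of the proof is H\"older continuity of $R_\eps$ in $x$. For $0<x_1<x_2$ in $\Sigma^+$, split $R_\eps(x_1,y)-R_\eps(x_2,y)$ into a tail integral over $[x_1^{\eta_\eps},x_2^{\eta_\eps}]$ and a common-range integral over $[x_2^{\eta_\eps},1]$. The tail is controlled via the vanishing bound $Cu^\beta$, giving $\le C\|\psi\|_\beta|x_2^{\eta_\eps\beta}-x_1^{\eta_\eps\beta}|\le C\|\psi\|_\beta|x_1-x_2|^{\eta_\eps\beta}\le C\|\psi\|_\beta|x_1-x_2|^{\alpha'}$ since $\alpha'<\eta_\eps\beta$. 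For the common range the integrand difference admits two pointwise bounds: a H\"older bound $A(u)=2H_\beta(\psi)|x_1-x_2|^\beta u^{-\beta/\eta_\eps}$ (coming from H\"older continuity in the first argument, which is shifted by $(x_1-x_2)u^{-1/\eta_\eps}$) and a vanishing bound $B(u)=CH_\beta(\psi)u^\beta$ (as in the $L^\infty$ estimate). Interpolating $\min(A,B)\le A^\theta B^{1-\theta}$ with $\theta=\alpha'/\beta\in(0,1)$ gives the pointwise bound
\[
C\|\psi\|_\beta|x_1-x_2|^{\alpha'}u^{\beta-\alpha'(1+\eta_\eps)/\eta_\eps},
\]
whose integral against $du/u$ over $[x_2^{\eta_\eps},1]$ is bounded independently of $x_2$ precisely when $\beta-\alpha'(1+\eta_\eps)/\eta_\eps>0$, that is, $\alpha'<\eta_\eps\beta/(1+\eta_\eps)=-\lambda_{3,\eps}\beta/(\lambda_{1,\eps}-\lambda_{3,\eps})$ --- exactly the hypothesis. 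This yields $|R_\eps(x_1,y)-R_\eps(x_2,y)|\le C\|\psi\|_\beta|x_1-x_2|^{\alpha'}$.

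The main obstacle is the interpolation step: using $A$ alone integrates to an unacceptable factor $x_2^{-\beta}$, while using $B$ alone fails to decay in $|x_1-x_2|$. Only the balanced geometric interpolation extracts the required rate, and the critical exponent matches the hypothesised bound exactly, explaining the appearance of $-\lambda_{3,\eps}\beta/(\lambda_{1,\eps}-\lambda_{3,\eps})$. Combining the $x$- and $y$-estimates by the triangle inequality gives the piecewise $C^{\alpha'}$ bound on $\Sigma^+$, and the symmetric argument handles $\Sigma^-$. Uniformity in $\eps$ follows from continuity of the eigenvalues $\lambda_{j,\eps}$, so $\alpha'$ may be chosen satisfying the inequality for all $\eps$ in a neighbourhood of $0$ and all constants bounded by a multiple of $\|\psi\|_\beta$.
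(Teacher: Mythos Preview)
Your proposal is correct and follows essentially the same approach as the paper: the decomposition $\widetilde\Psi_\eps=\phi_\eps+R_\eps$ is the paper's splitting $\widetilde\Psi_1+\widetilde\Psi_2$, the substitution for $\phi_\eps$ is identical, and the key step---interpolating between the H\"older bound and the decay bound on the common range to extract the exponent $\alpha'<-\lambda_{3,\eps}\beta/(\lambda_{1,\eps}-\lambda_{3,\eps})$---matches the paper's use of $\min\{a,b\}\le a^\delta b^{1-\delta}$ with $\delta=\alpha'/\beta$. The only cosmetic difference is that you change variables to $u=e^{\lambda_{3,\eps}t}$ for $R_\eps$, whereas the paper carries out the same estimates directly in the $t$ variable.
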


\begin{proof}
We suppress the dependence on $\eps$.
Write $\tilde\psi(x,y,z)=\psi_1(x)+\psi_2(x,y,z)$ where $\psi_1(x)=\tilde\psi(x,0,0)$.
Then $\psi_1$ and $\psi_2$ are H\"older with 
$H_\beta(\psi_1)\le H_\beta(\psi)$ and 
$H_\beta(\psi_2)\le 2H_\beta(\psi)$.
Also, $\psi_1(0)=0$ and $\psi_2(x,0,0)\equiv0$.
Define 
\[
\widetilde\Psi_1(x)=\int_0^{-\frac{1}{\lambda_1}\log x}
\psi_1(xe^{\lambda_1t})\,dt, \quad
\widetilde\Psi_2(x,y)=\int_0^{-\frac{1}{\lambda_1}\log x}
\psi_2(xe^{\lambda_1t},ye^{\lambda_2t},e^{\lambda_3t})\,dt.
\]
Recall that $\lambda_2<\lambda_3<0<\lambda_1$.

First we carry out the estimates for $\widetilde\Psi_1$ with $\alpha'=\beta$. 
By the change of variables $u=xe^{\lambda_1t}$,
\[
\widetilde\Psi_1(x)=\frac{1}{\lambda_1}\int_x^1 \frac{\psi_1(u)}{u}\,du.
\]
Now $|\psi_1(u)|=|\psi_1(u)-\psi_1(0)|\le H_\beta(\psi) u^\beta$, 
so $|\widetilde\Psi_1|_\infty \le \frac{H_\beta(\psi)}{\lambda_1} \int_0^1 u^{-(1-\beta)}\,du =\frac{H_\beta(\psi)}{\lambda_1\beta}$.
Also for $x_1>x_2>0$,
\[
\begin{aligned}
|\widetilde\Psi_1(x_1)-\widetilde\Psi_1(x_2)|&\le \frac{H_\beta(\psi)}{\lambda_1}\int_{x_2}^{x_1}u^{-(1-\beta)}\,du \\ 
&=\frac{H_\beta(\psi)}{\lambda_1\beta} (x_1^\beta-x_2^\beta) \le \frac{H_\beta(\psi)}{\lambda_1\beta}  (x_1-x_2)^\beta.
\end{aligned}
\]
Here, we have used that $x_1^\beta-x_2^\beta \le (x_1-x_2)^\beta$ for all $\beta\in[0,1]$.
Hence $\|\widetilde\Psi_1\|_\beta\le \frac{1}{\lambda_1\beta}\|\psi\|_\beta$.

Next, we carry out the estimates for $\widetilde\Psi_2$.  Note that
\begin{align}\ \label{eq:psi1} \nonumber
 |\psi_2(xe^{\lambda_1t},ye^{\lambda_2t},e^{\lambda_3t})|  = &
|\psi_2(xe^{\lambda_1t},ye^{\lambda_2t},e^{\lambda_3t})-\psi_2(xe^{\lambda_1 t},0,0)| \\
\nonumber
=& |\psi_2(xe^{\lambda_1t},ye^{\lambda_2t},e^{\lambda_3t})-\psi_2(xe^{\lambda_1t},ye^{\lambda_2t},0)| \\
\nonumber &+ |\psi_2(xe^{\lambda_1t},ye^{\lambda_2t},0)-\psi_2(xe^{\lambda_1t},0,0)| \\
\le & H_\beta(\psi_2)(e^{\beta\lambda_3 t}+|y|^\beta e^{\beta\lambda_2 t})\le 4 H_\beta(\psi) e^{\beta\lambda_3 t}.
\end{align}
In particular,
\[ 
|\widetilde\Psi_2|_\infty \le 4H_\beta(\psi) \int_0^{-\frac{1}{\lambda_1}\log x} e^{\beta\lambda_3 t}\,dt 
=\frac{4H_\beta(\psi)}{\beta|\lambda_3|}  (1-x^{-\beta\lambda_3/\lambda_1})\le  \frac{4H_\beta(\psi)}{\beta|\lambda_3|}.
\]
Similarly,
\[
|\widetilde\Psi_2(x,y_1)-\widetilde\Psi_2(x,y_2)|\le \int_0^{-\frac{1}{\lambda_1}\log x}H_\beta(\psi_2)|y_1-y_2|^\beta e^{\beta\lambda_2t}\,dt\le \frac{2H_\beta(\psi)}{\beta|\lambda_2|} |y_1-y_2|^\beta.
\]

For $x_1>x_2>0$, we have 
$|\widetilde\Psi_2(x_1,y)-\widetilde\Psi_2(x_2,y)|\le A+B$ where
\begin{align*}
A& = \int_{-\frac{1}{\lambda_1}\log x_1}^{-\frac{1}{\lambda_1}\log x_2}\psi_2(x_2e^{\lambda_1t},ye^{\lambda_2t},e^{\lambda_3t})\,dt, \\
B& = \int_0^{-\frac{1}{\lambda_1}\log x_1}|\psi_2(x_1e^{\lambda_1t},ye^{\lambda_2t},e^{\lambda_3t})-\psi_2(x_2e^{\lambda_1t},ye^{\lambda_2t},e^{\lambda_3t})|\,dt.
\end{align*}
By~\eqref{eq:psi1},
\[
A\le 4 H_\beta(\psi)  \int_{-\frac{1}{\lambda_1}\log x_1}^{-\frac{1}{\lambda_1}\log x_2} e^{\beta\lambda_3t}\,dt
=\frac{4H_\beta(\psi)}{\beta|\lambda_3|}(x_1^{\gamma}-x_2^{\gamma})\le \frac{4H_\beta(\psi)}{\beta|\lambda_3|} (x_1-x_2)^{\gamma}
\]
where $\gamma=-\lambda_3\beta/\lambda_1$.
Let 
\[
\alpha'=\delta\beta, \qquad
\lambda'=\delta \lambda_1+(1-\delta)\lambda_3,
\]
where $0<\delta<-\lambda_3/(\lambda_1-\lambda_3)<1$.
In particular, $\alpha'<\gamma$ and $\lambda'<0$.
Since the eigenvalues $\lambda_1,\lambda_3$ depend continuously on $\eps$, we can choose $\delta$ independent of $\eps$.
The inequality $\min\{a,b\}\le a^\delta b^{1-\delta}$ holds for all $a,b\ge0$.
By~\eqref{eq:psi1},
\begin{align*}
&|\psi_2(x_1e^{\lambda_1t},ye^{\lambda_2t},e^{\lambda_3t})-\psi_2(x_2e^{\lambda_1t},ye^{\lambda_2t},e^{\lambda_3t})|\\
 &\le \min\{H_\beta(\psi_2)(x_1-x_2)^\beta e^{\beta\lambda_1 t}, 8H_\beta(\psi) e^{\beta\lambda_3 t}\}
 \le 8H_\beta(\psi) (x_1-x_2)^{\delta\beta} e^{\delta\lambda' t}.
\end{align*}
Hence $B\le \frac{8}{\delta|\lambda'|} H_\beta(\psi) (x_1-x_2)^{\alpha'}$ completing the proof.
\end{proof}

\begin{corollary}\label{cor:reg}
There exists $\alpha\in(0,\beta)$ such that
$\Psi_\eps$ is piecewise $C^{\alpha}$.
Moreover, there is a constant $C>0$ such that $\sup_\eps\|\Psi_\eps\|_{\alpha}\le C\|\psi\|_\beta$.
\end{corollary}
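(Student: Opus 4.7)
The plan is to reduce to Lemma~\ref{lem:Psi} by decomposing $\Psi_\eps$ into a ``singular'' piece that matches $\widetilde\Psi_\eps$ plus a regular remainder. By Proposition~\ref{tau}, $\tau_\eps(\xi) = -\log|x|/\lambda_{1,\eps} + \tau_{2,\eps}(\xi)$ with $\tau_{2,\eps}$ a continuous family of piecewise $C^2$, uniformly bounded functions; geometrically the first term is the time for the orbit of $\xi = (x,y,1)$ to exit a fixed linearisation neighbourhood $U$ of the singularity $0$, while the second is a bounded first-hit time of the $C^2$ flow away from $0$. Correspondingly I would split
\[
\Psi_\eps(\xi) = A_\eps(\xi) + B_\eps(\xi),
\]
where $A_\eps$ integrates $\tilde\psi \circ X_\eps(\xi,\cdot)$ over $[0,\,-\log|x|/\lambda_{1,\eps}]$ and $B_\eps$ over the remaining interval of length $\tau_{2,\eps}(\xi)$.

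To control $A_\eps$, I would apply a continuous family of $C^{1+\alpha}$ (Hartman/Sternberg-type) linearising changes of coordinates $h_\eps$ on $U$ conjugating $X_\eps$ to its linearisation $(x,y,z)\mapsto(xe^{\lambda_{1,\eps}t}, ye^{\lambda_{2,\eps}t}, ze^{\lambda_{3,\eps}t})$, with $h_\eps(0)=0$. Pulling back by $h_\eps$ identifies $A_\eps(\xi)$ with the integral defining $\widetilde\Psi_\eps$, but for the observable $\tilde\psi\circ h_\eps$ in place of $\tilde\psi$; since $h_\eps$ is $C^{1+\alpha}$ and fixes $0$, the pulled-back observable is H\"older with norm controlled by $\|\psi\|_\beta$, so Lemma~\ref{lem:Psi} yields $\|A_\eps\|_{\alpha'} \le C\|\psi\|_\beta$ uniformly in $\eps$. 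For $B_\eps$, the integrand $\tilde\psi\circ X_\eps$ is H\"older, the integration takes place in a region bounded away from $0$, the length $\tau_{2,\eps}$ is piecewise $C^2$ and uniformly bounded, and the initial point $X_\eps(\xi,-\log|x|/\lambda_{1,\eps})$ depends piecewise H\"older on $\xi$ through the exit map from $U$; hence $\|B_\eps\|_\beta \le C\|\psi\|_\beta$ uniformly in $\eps$. Combining gives $\Psi_\eps$ piecewise $C^\alpha$ with $\alpha=\alpha'$ and $\|\Psi_\eps\|_\alpha \le C\|\psi\|_\beta$.

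The main technical obstacle is the smooth linearisation step: one needs at least $C^{1+\alpha}$ conjugacy to the linear flow, varying continuously in $\eps$. For the Lorenz spectrum this follows from non-resonance via Sternberg/Hartman-type theorems, and continuity in $\eps$ is automatic from continuous dependence of the eigenvalues on the vector field. If only the $C^0$ linearisation of Hartman-Grobman (as invoked in Proposition~\ref{tau}) were available, the fallback would be to compare $X_\eps(\xi,t)$ directly with the linear orbit and bound the resulting difference of integrands using $\tilde\psi(0)=0$ together with the exponential decay estimates already exploited in the proof of Lemma~\ref{lem:Psi}.
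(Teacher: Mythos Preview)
Your decomposition $\Psi_\eps=A_\eps+B_\eps$ and the treatment of each piece is exactly the paper's strategy: the paper likewise writes $\Psi=\widetilde\Psi+\widehat\Psi$ (your $A_\eps$ and $B_\eps$), handles $\widetilde\Psi$ via Lemma~\ref{lem:Psi} after linearising near the singularity, and handles $\widehat\Psi$ using boundedness of $\tau_2$ together with the H\"older dependence of the exit point $\xi'=(1,\,y|x|^{-\lambda_2/\lambda_1},\,|x|^{-\lambda_3/\lambda_1})$ on $\xi$.

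The one substantive correction concerns the linearisation. You ask for a $C^{1+\alpha}$ Sternberg-type conjugacy and flag non-resonance as the ``main technical obstacle''. This is stronger than needed and potentially problematic: the class of geometric Lorenz attractors allows any eigenvalues with $\lambda_2<\lambda_3<0<\lambda_1$ and $\lambda_1+\lambda_3>0$, so resonances cannot be excluded in general. The paper avoids this entirely by invoking only a \emph{H\"older} Grobman--Hartman linearisation (Barreira--Valls~\cite{BarreiraValls07}), which holds unconditionally with H\"older exponent and constants uniform in $\eps$. A $C^\alpha$ conjugacy $h_\eps$ with $h_\eps(0)=0$ already makes $\tilde\psi\circ h_\eps$ H\"older (with some exponent $\le\alpha\beta$), and that is all Lemma~\ref{lem:Psi} requires. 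So your obstacle dissolves: no non-resonance hypothesis, no need for your fallback comparison argument---just the H\"older linearisation, at the cost of a smaller final exponent $\alpha$.
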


\begin{proof}
After a $C^\alpha$ change of coordinates  in a neighbourhood of the singularity,
we may suppose without loss of generality that the flow $X_\eps$ is linear near the singularity.   Here, $\alpha>0$ can be chosen independent of $\eps$ with bounded H\"older constants for the linearisation~\cite{BarreiraValls07}.
We choose $\alpha<\alpha'$ where $\alpha'$ is as in Lemma~\ref{lem:Psi}

For the remainder of the proof we suppress the dependence on $\eps$.
The first hit time  in this neighbourhood is given by
$\tau_1(\xi)=-\frac{1}{\lambda_1}\log |x|$.  
Recall that $\xi=(x,y,1)$ and
set $\xi'=X(\xi,\tau_1(\xi))=(1,yx^{-\lambda_2/\lambda_1},x^{-\lambda_3/\lambda_1})$.  Note that the dependence of $\xi'$ on $\xi$ is H\"older (also uniformly in $\eps$).  Then
$\tau=\tau_1+\tau_2$ where $\tau_2$ is $C^\alpha$ in $\xi'$ and hence $\xi$ (uniformly in $\eps$).  Also,
$\Psi=\widetilde\Psi+\widehat\Psi$ where
\begin{equation}\label{eq:psi'}
\widetilde\Psi(\xi)=\int_0^{\tau_1(\xi)}\tilde\psi(xe^{\lambda_1 t},ye^{\lambda_2 t},e^{\lambda_3 t})\,dt, \quad
\widehat\Psi(\xi)=\int_0^{\tau_2(\xi)}\tilde\psi(X(\xi',t))\,dt.
\end{equation}
Since $\|\tau_2\|_\infty$ is bounded, it is
immediate that $\widehat\Psi$ is $C^\alpha$ uniformly in $\eps$.
By Lemma~\ref{lem:Psi}, the same is true for $\widetilde\Psi$.
\end{proof}

We can now state and prove the analogue of Theorem~\ref{main3} for normalised families.  Define $\|\psi\|_{(\eps)}=\int|\psi|\,d\mu_\eps+|\psi(0)|$.

\begin{corollary}\label{maincor} 
Let $X_\eps$ be a normalised family of flows admitting geometric Lorenz attractors.  Then
\begin{itemize}
\item[(a)]
$\lim_{\eps\to0}\sigma^2_{X_\eps}(\psi)=\sigma^2_{X_0}(\psi)$ 
for all $C^\beta$ observables  $\psi:\R^3\to\R$ 
\item[(b)]
For any $H>0$, 
there exists $C>0$  such that
\[
|\sigma^2_{X_\eps}(\psi)-\sigma^2_{X_\eps}(\psi')|\le C\|\psi-\psi'\|_{(\eps)}(1+|\log\|\psi-\psi'\|_{(\eps)}|)
\]
for all $C^\beta$ observables  $\psi,\,\psi':\R^3\to\R$ with
$\|\psi\|_\beta,\,\|\psi'\|_\beta\le H$, and all $\eps\ge0$.
\end{itemize}
\end{corollary}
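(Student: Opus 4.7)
The plan is to reduce both parts of the corollary from the flow level to the Poincar\'e map level via Proposition~\ref{MelbTor}. Since $\sigma^2_{X_\eps}(\psi) = \sigma^2_{X_\eps}(\tilde\psi)$ by~\eqref{eq:MT1}, we may work throughout with $\tilde\psi = \psi - \psi(0)$. Proposition~\ref{MelbTor} then gives
\[
\sigma^2_{X_\eps}(\psi) = \frac{\sigma^2_{F_\eps}(\Psi_\eps)}{\int \tau_\eps\,d\mu_{F_\eps}},
\]
where $\Psi_\eps$ is the induced observable defined by~\eqref{inducedobs}. By~\eqref{tauntau}, the denominators converge to $\int\tau_0\,d\mu_{F_0}>0$, so they are bounded away from $0$ and $\infty$ uniformly in $\eps$. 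Hence the problem reduces to controlling $\sigma^2_{F_\eps}(\Psi_\eps)$.

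For part~(a), I apply Theorem~\ref{main2} with $\Psi_\eps$ as the family of piecewise $C^\alpha$ observables. The two hypotheses of that theorem are exactly what has been prepared: Corollary~\ref{cor:reg} supplies the uniform H\"older bound $\sup_\eps\|\Psi_\eps\|_\alpha\le C\|\psi\|_\beta$, and Lemma~\ref{Modified_New_Marks} supplies the $L^1$-continuity condition~\eqref{emain2}. Theorem~\ref{main2} then yields $\sigma^2_{F_\eps}(\Psi_\eps)\to\sigma^2_{F_0}(\Psi_0)$, and dividing by the convergent denominator gives $\sigma^2_{X_\eps}(\psi)\to\sigma^2_{X_0}(\psi)$.

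For part~(b), write $\Psi_\eps,\Psi_\eps'$ for the induced observables of $\tilde\psi,\tilde\psi'$; by linearity of the induction, $\Psi_\eps - \Psi_\eps'$ is itself the induced observable of $\tilde\psi - \tilde\psi'$. By Corollary~\ref{cor:reg}, both $\|\Psi_\eps\|_\alpha$ and $\|\Psi_\eps'\|_\alpha$ are bounded by $CH$, so the constant in Proposition~\ref{p:main2} depends only on $H$. The key estimate is then to bound $\|\Psi_\eps-\Psi_\eps'\|_{1,\eps}$ in terms of $\|\psi-\psi'\|_{(\eps)}$. I use the Kac/suspension identity
\[
\int_\Sigma \int_0^{\tau_\eps(\xi)} f(X_\eps(\xi,t))\,dt\,d\mu_{F_\eps}(\xi) = \Bigl(\int\tau_\eps\,d\mu_{F_\eps}\Bigr)\int f\,d\mu_\eps,
\]
applied to $f=|\psi-\psi'|$, together with the pointwise estimate $|\tilde\psi-\tilde\psi'|\le |\psi-\psi'|+|\psi(0)-\psi'(0)|$, to obtain
\[
\|\Psi_\eps-\Psi_\eps'\|_{1,\eps}\le \Bigl(\int\tau_\eps\,d\mu_{F_\eps}\Bigr)\|\psi-\psi'\|_{(\eps)}.
\]
Plugging this into Proposition~\ref{p:main2} and dividing by the uniformly controlled denominator yields the claimed bound, using that the function $x\mapsto x(1+|\log x|)$ is quasi-monotone under multiplication by bounded constants (for small~$x$ it merely shifts the logarithm, for bounded~$x$ both sides stay bounded).

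The only real subtlety is carrying the constant $\psi(0)$ through cleanly: the observable $\tilde\psi$ is what we can reasonably bound at the Poincar\'e level (since it vanishes at the singularity, which is what allows Corollary~\ref{cor:reg} to produce a piecewise H\"older induced function), yet the norm $\|\cdot\|_{(\eps)}$ in the statement involves $\psi$ and its value at the singularity separately. The Kac formula applied to $f=|\psi-\psi'|$ and the point-evaluation term $|\psi(0)-\psi'(0)|\int\tau_\eps\,d\mu_{F_\eps}$ absorbed separately is precisely what reconciles the two.
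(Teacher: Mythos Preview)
Your proposal is correct and follows essentially the same route as the paper: reduce to the Poincar\'e level via~\eqref{eq:MT1} and Proposition~\ref{MelbTor}, use~\eqref{tauntau} to control the denominator, then for part~(a) feed Corollary~\ref{cor:reg} and Lemma~\ref{Modified_New_Marks} into Theorem~\ref{main2}, and for part~(b) feed Corollary~\ref{cor:reg} into Proposition~\ref{p:main2} together with the Kac identity to bound $\|\Psi_\eps-\Psi'_\eps\|_{1,\eps}$ by $(\int\tau_\eps\,d\mu_{F_\eps})\|\psi-\psi'\|_{(\eps)}$. The only cosmetic difference is that the paper applies the suspension identity directly to $|\tilde\psi-\tilde\psi'|$ and then bounds $\int|\tilde\psi-\tilde\psi'|\,d\mu_\eps$, whereas you first use the pointwise bound $|\tilde\psi-\tilde\psi'|\le|\psi-\psi'|+|\psi(0)-\psi'(0)|$ and then apply the identity; the outcome is identical.
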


 \begin{proof} 
By Corollary~\ref{cor:reg}, there exists  $C_1>0$, $\alpha\in(0,1)$ such that
$\Psi_\eps$, $\Psi'_\eps$ are piecewise $C^\alpha$ and
\begin{equation} \label{eq:C1}
\sup_\eps\|\Psi_\eps\|_\alpha\le C_1\|\psi\|_\beta, \qquad
\sup_\eps\|\Psi'_\eps\|_\alpha\le C_1\|\psi'\|_\beta.
\end{equation}
By~\eqref{eq:MT1} and Proposition~\ref{MelbTor},
\begin{equation*}\begin{aligned}
& |\sigma^2_{X_\eps}(\psi)-\sigma^2_{X_{\eps'}}(\psi')|
 =|\sigma^2_{X_\eps}(\tilde\psi)-\sigma^2_{X_{\eps'}}(\tilde\psi')|
 =\left| \frac{\sigma_{F_\eps}^2( \Psi_\eps)}{\int \tau_\eps\, d\mu_{F_\eps}}- \frac{\sigma_{F_{\eps'}}^2( \Psi'_{\eps'})}{\int \tau_{\eps'}\, d\mu_{F_{\eps'}}}\right|\\ 
&\qquad \le \sigma_{F_{\eps'}}^2( \Psi'_{\eps'}) \left| \frac{1}{\int \tau_\eps\, d\mu_{F_\eps}}- \frac{1}{\int \tau_{\eps'}\, d\mu_{F_{\eps'}}}\right|  +\frac{1}{\int \tau_\eps\, d\mu_{F_\eps}} \left|  \sigma_{F_\eps}^2( \Psi_{\eps})- \sigma_{F_{\eps'}}^2( \Psi'_{\eps'}) \right|.
\end{aligned}\end{equation*}

First suppose that $\psi=\psi'$ and $\eps'=0$.
By Lemma \ref{Modified_New_Marks} and~\eqref{eq:C1}, we can apply Theorem \ref{main2} to deduce that
$\lim_{\eps\to0}\sigma_{F_\eps}^2( \Psi_\eps)= \sigma_{F_0}^2( \Psi_0)$.
Part~(a) now follows from~\eqref{tauntau}.

Next suppose that $\eps=\eps'$.
By Proposition~\ref{p:main2} and~\eqref{eq:C1}, there is a constant $C>0$ (independent of $\eps$) such that
$|  \sigma_{F_\eps}^2( \Psi_\eps)- \sigma_{F_\eps}^2( \Psi'_\eps) |\le
C \|\Psi_\eps-\Psi'_\eps\|_{1,\eps}
(1+|\log \|\Psi_\eps-\Psi'_\eps\|_{1,\eps}|)$
where
\begin{equation*} \begin{aligned}
& \|\Psi_\eps-\Psi'_\eps\|_{1,\eps}
 =\int |\Psi_\eps-\Psi'_\eps|\,d\mu_{F_\eps}
 \le \int\int_0^{\tau_\eps(\xi)}|\tilde\psi(X_\eps(\xi,t)-\tilde\psi'(X_\eps(\xi,t)|\,dt\,d\mu_{F_\eps}
\\ & = \int\tau_\eps\, d\mu_{F_\eps}\int|\tilde\psi-\tilde\psi'|\,d\mu_\eps
 \le \int\tau_\eps\, d\mu_{F_\eps}\Big(\int|\psi-\psi'|\,d\mu_\eps+|\psi(0)-\psi'(0)|\Big).
\end{aligned}\end{equation*}
This proves part~(b).
 \end{proof}

 \begin{proof} [Proof of Theorem \ref{main3}]
Let 
$\phi_\eps:U\to U$ be the family of normalising conjugacies in section~\ref{sec:normal}.
Recall that
$\tilde X_\eps(x,t)=\phi_\eps^{-1}\circ X_\eps(\phi_\eps(x),t)$.
Define
\[
\psi_\eps=\psi\circ\phi_\eps,
\quad \psi'_\eps=\psi'\circ\phi_\eps,
\quad \tilde\mu_\eps={\phi_\eps^{-1}}\!_*\mu_\eps.
\]
Then $\psi_\eps$ is H\"older and
\[
\int\psi_\eps\,d\tilde\mu_\eps=
\int\psi\,d\mu_\eps, \qquad
\int_0^t \psi_\eps\circ \tilde X_\eps(s)\,ds=
\Big(\int_0^t \psi\circ X_\eps(s)\,ds\Big)\circ \phi_\eps.
\]
Hence $\int_0^t \psi\circ X_\eps(s)\,ds-t\int\psi\,d\mu_\eps$ 
and 
$\int_0^t \psi_\eps\circ \tilde X_\eps(s)\,ds-t\int\psi_\eps\,d\tilde\mu_\eps$ have the same distribution (relative to the probability measures $\mu_\eps$ and $\tilde\mu_\eps$ respectively) 
so it follows from~\eqref{eq:CLT} that $\sigma^2_{X_\eps}(\psi)=\sigma^2_{\tilde X_\eps}(\psi_\eps)$.
Similar comments apply to $\psi'$.

It follows from the definitions that $\tilde X_\eps$ is a normalised family.
By Corollary~\ref{phieps}, there exists $H>0$ and $\beta\in(0,1)$ such that
$\sup_\eps\|\psi_\eps\|_\beta\le H$ and 
$\sup_\eps\|\psi'_\eps\|_\beta\le H$.

By Corollary~\ref{maincor}(b), there is a constant $C>0$ such that
\[
\begin{aligned}
|\sigma_{X_\eps}^2(\psi)- & \sigma_{X_0}^2(\psi)|
 =|\sigma_{\tilde X_\eps}^2(\psi_\eps)-\sigma_{\tilde X_0}^2(\psi_0)|
\\ &  \le |\sigma_{\tilde X_\eps}^2(\psi_\eps)-\sigma_{\tilde X_\eps}^2(\psi_0)|
+|\sigma_{\tilde X_\eps}^2(\psi_0)-\sigma_{\tilde X_0}^2(\psi_0)|
\\ &  \le C\|\psi_\eps-\psi_0\|_\infty(1+|\log\|\psi_\eps-\psi_0\|_\infty|)
+|\sigma_{\tilde X_\eps}^2(\psi_0)-\sigma_{\tilde X_0}^2(\psi_0)|.
\end{aligned}
\]
By Corollary~\ref{maincor}(a),
the second term on the right-hand side converges to zero as $\eps\to0$.
Also, $\lim_{\eps\to0}\|\psi_\eps-\psi_0\|_\infty=\lim_{\eps\to0}\|\psi\circ \phi_\eps^{-1}-\psi\circ\phi_0^{-1}\|_\infty=0$ by
Corollary~\ref{phieps}.
Hence $\lim_{\eps\to0} \sigma_{X_\eps}^2(\psi)=\sigma_{X_0}^2(\psi)$ proving part~(a).

Finally, by Corollary~\ref{maincor}(b),
\[
\begin{aligned}
 |\sigma^2_{X_0}(\psi)-\sigma^2_{X_0}(\psi')
& =|\sigma^2_{\tilde X_0}(\psi_0)-\sigma^2_{\tilde X_0}(\psi'_0)|
\\ & \le C\|\psi_0-\psi'_0\|_{(0)}(1+|\log\|\psi_0-\psi'_0\|_{(0)}|)
 \\ & = C\|\psi-\psi'\|(1+|\log\|\psi-\psi'\||)
\end{aligned}
\]
yielding part~(b).
\end{proof}

\end{document}